\newtheorem{thm}{Theorem}[section]
\newtheorem{lem}[thm]{Lemma}
\declaretheorem[numbered=no]{Question}
\newcommand{\thickhline}{%
    \noalign {\ifnum 0=`}\fi \hrule height 1pt
    \futurelet \reserved@a \@xhline
}
\newcolumntype{"}{@{\hskip\tabcolsep\vrule width 1pt\hskip\tabcolsep}}
\title{\textbf{On a finite subsemigroup of semigroups in which $x^r=x$}}
\author{\large{Jungin Lee}}
\date{}
\newcommand\shorttitle{On a finite subsemigroup of semigroups in which $x^r=x$}
\newcommand\authors{Jungin Lee}
\ifodd\value{page}
\authors
\shorttitle
\begin{document}
\maketitle

\vspace{0mm}

\textbf{Abstract.} In this paper, we consider the following problem: For every positive integer $r \geq 2$, find all positive integers $n$ such that for every semigroup of order $\geq n$ in which $x^r=x$ for every element $x$ has a subsemigroup of order $n$. 

\vspace{5mm}

\textbf{Keywords.} Finite subsemigroup

\vspace{5mm}

\section{Introduction}

\noindent Green and Rees [\ref{ref:1}] proved that the order of a free idempotent semigroup on $n$ generators is given by $I_n=\sum_{r=1}^{n}\binom{n}{r}\prod_{i=1}^{r}(r-i+1)^{2^i}$. Thus for every idempotent semigroup $S$ of order $\geq n$, the order $x$ of the subsemigroup of $S$ generated by $n$ distinct elements of $S$ satisfies $n \leq x \leq I_n$. There exist only finite number of idempotent semigroups of order $\leq I_n$. This implies that for every positive integer $n$, we can determine that whether every idempotent semigroup $S$ of order $\geq n$ has a subsemigroup of order $n$ by finite number of calculations. Based on this idea we will solve the following problem. 

\begin{Question} \label{quen}
For every positive integer $r \geq 2$, find all positive integers $n$ such that for every semigroup of order $\geq n$ in which $x^r=x$ for every element $x$ has a subsemigroup of order $n$. 
\end{Question}

\vspace{3mm}

\section{Case I : $r=2$}

Every element of an idempotent semigroup forms a subsemigroup of order 1, so the Question is true for $n=1$. In this section, we will show that $n=1,2,4,6$ are the only positive integers which every idempotent semigroup of order $\geq n$ has a subsemigroup of order $n$. 

\vspace{3mm}

\begin{thm} \label{thm2.1}
Every idempotent semigroup $S$ of order $\geq 2$ has a subsemigroup of order $2$.
\end{thm}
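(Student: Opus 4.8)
The plan is to fix two distinct elements $a,b\in S$ and to locate a two-element subsemigroup among the short products $ab$, $ba$, $aba$, $bab$ (together with $a$ and $b$ themselves). The guiding observation is that in any band the set $\{ab,aba\}$ is automatically closed under multiplication: using only $a^2=a$ and $b^2=b$ one checks $ab\cdot ab=ab$, $ab\cdot aba=aba$, $aba\cdot ab=ab$ (since $abaab=abab=(ab)^2=ab$), and $aba\cdot aba=aba$. Hence $\{ab,aba\}$ is a subsemigroup, and it has order $2$ unless $ab=aba$. Symmetrically, $\{ba,bab\}$ is a subsemigroup, of order $2$ unless $ba=bab$. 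So if either $ab\neq aba$ or $ba\neq bab$, we are finished.

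Next I would dispose of the leftover case $ab=aba$ and $ba=bab$. Then $ab\cdot ba=a(bb)a=aba=ab$ and $ba\cdot ab=b(aa)b=bab=ba$, so $\{ab,ba\}$ is closed (a left-zero pair), and it has order $2$ unless $ab=ba$. Finally, if in addition $ab=ba$, set $e:=ab=ba=aba=bab$; then $\{a,e\}$ is closed, because $a^2=a$, $ae=ab=e$, $ea=aba=e$, $e^2=e$, and likewise $\{b,e\}$ is closed. One of these two pairs has order $2$ unless $a=e=b$, which contradicts $a\neq b$. This exhausts all cases and proves the theorem; note that only $|S|\geq 2$ is used, not finiteness. (One could instead invoke the structure of a band as a semilattice of rectangular bands and pick a two-element left- or right-zero subsemigroup inside one component, but the direct argument above is self-contained.)

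The mathematical content is light — it is really just verifying the handful of closure identities from idempotency. The only point requiring care, the ``main obstacle'' such as it is, is organizing the degenerate subcases ($ab=aba$, then also $ba=bab$, then also $ab=ba$, then also $a=ab$, then also $b=ab$) into a chain that is followed all the way down to the contradiction $a=b$, so that no possibility is accidentally left uncovered.
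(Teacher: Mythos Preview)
Your proof is correct and follows essentially the same approach as the paper: both hinge on the observation that $\{ab,aba\}$ is closed in any band, then chase down the degenerate subcases. The paper's version is marginally shorter, passing next to the subsemigroup $\{a,ab,aba\}=\{a,ab\}$ (forcing $a=ab$, and symmetrically $b=ba$, so that $\{a,b\}$ itself is the desired subsemigroup) rather than through your intermediate $\{ab,ba\}$ and $\{a,e\}$ steps.
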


\begin{proof}
Suppose that $S$ does not have a subsemigroup of order 2, and let $a$ and $b$ be different elements of $S$. Then $S_1=\left \{ ab,aba \right \}, \,S_2=\left \{ a,ab,aba \right \}$ are subsemigroups of $S$. $\left | S_1 \right |\neq 2$ implies $ab=aba$, and $\left | S_2 \right | =\left | \left \{ a,ab \right \} \right |  \neq 2$ implies $a=ab$. For the same reason, we get $b=ba=bab$. Thus $\left | \left \langle a,b \right \rangle \right |=2$, a contradiction.
\end{proof}

\vspace{3mm}

\begin{lem} \label{lem2.2}
If an idempotent semigroup $S$ of order $\geq 4$ does not have a subsemigroup of order $4$, then $\forall a,b \in S \:\: \left | \left \langle a,b \right \rangle \right | \leq 3$ and $aba,bab \in \left \{ a,b,ab,ba \right \}$. 
\end{lem}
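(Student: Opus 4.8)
The plan rests on one structural fact. In an idempotent semigroup, for any $a,b$ the generated subsemigroup is $\langle a,b\rangle=\{a,b,ab,ba,aba,bab\}$ as a set of values: using $a^2=a$, $b^2=b$ and $(uv)^2=uv$ for all elements $u,v$, every word in $a,b$ of length $\ge 4$ collapses to one of these six (e.g.\ $abab=ab$, $baba=ba$, hence $ababa=aba$, $babab=bab$). Moreover the four-element set $T=\{ab,ba,aba,bab\}$ is closed under multiplication. Checking this is a routine verification of the sixteen products via the same reductions ($ab\cdot ba=abba=aba$, $ba\cdot ab=baab=bab$, $aba\cdot bab=ababab=ab$, $ba\cdot bab=babab=bab$, and so on), and it is really the engine of the argument, since it produces a subsemigroup of $S$ of order at most $4$ inside every two-generated subsemigroup.

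For the first assertion, assume $a\neq b$ (if $a=b$ then $\langle a,b\rangle=\{a\}$). Since $S$ has no subsemigroup of order $4$ and $T$ is a subsemigroup of $S$, we must have $|T|\neq 4$. I then show that whenever two of the four words $ab,ba,aba,bab$ represent the same element of $S$, the set $T$ already has at most two elements. This is done by running through the six possible coincidences, each time multiplying the relevant identity on the left or right by $a$ or $b$ and using idempotency: for instance $ab=aba$ gives $bab=b\cdot aba=baba=ba$, so $T=\{ab,ba\}$; $ab=ba$ gives $aba=a\cdot ab=ab$ and $bab=b\cdot ba=ba$, so $|T|=1$; $aba=bab$ gives, on left-multiplication by $a$ and by $b$, $aba=ab$ and $bab=ba$, hence $ab=aba=bab=ba$ and $|T|=1$; and the remaining three cases are symmetric. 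Consequently $|T|\neq 3$, hence $|T|\le 2$. Since $\langle a,b\rangle=\{a,b\}\cup T$, this gives $|\langle a,b\rangle|\le 4$; and if $|\langle a,b\rangle|=4$ then $\langle a,b\rangle$ is itself a subsemigroup of $S$ of order $4$, a contradiction. Therefore $|\langle a,b\rangle|\le 3$.

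For the second assertion, again assume $a\neq b$. If $|\langle a,b\rangle|=2$ then $ab,aba\in\{a,b\}$ and we are done, so suppose $\langle a,b\rangle=\{a,b,c\}$ with $a,b,c$ distinct. Assume toward a contradiction that $aba\notin\{a,b,ab,ba\}$. Then $aba=c$, and $c\notin\{ab,ba\}$ forces $ab,ba\in\{a,b\}$. But $ab=a$ gives $aba=a^2=a\neq c$, while $ab=b$ gives $aba=ba\in\{a,b\}$, so $c\in\{a,b\}$; either case is a contradiction. Hence $aba\in\{a,b,ab,ba\}$, and interchanging $a$ and $b$ gives the same for $bab$.

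The step I expect to be the main obstacle is the closure of $T=\{ab,ba,aba,bab\}$ under multiplication: once that is in hand, everything else is bounded bookkeeping. That verification is elementary but must be carried out carefully, tracking the band reductions $a^2=a$, $b^2=b$, $(uv)^2=uv$ inside words of length up to six. A secondary point requiring care is the claim that a single coincidence among the four words already forces $|T|\le 2$, where one must check all six pairs (three up to the $a\leftrightarrow b$ symmetry).
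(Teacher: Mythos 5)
Your proof is correct and follows essentially the paper's route: both arguments hinge on the closure of $\{ab,ba,aba,bab\}$ under multiplication and finish the claim $aba\in\{a,b,ab,ba\}$ with the same contradiction via $ab,ba\in\{a,b\}$; the only deviation is that you rule out the three-element case by a direct coincidence analysis, where the paper instead uses that $\{a,ab,ba,aba,bab\}$ is also a subsemigroup. One small imprecision: the coincidences $ab=bab$ and $ba=aba$ are mirror images of each other rather than of any case you treated explicitly, but they are settled by the same one-line computation (e.g.\ $ab=bab$ gives $aba=baba=ba$), so nothing essential is missing.
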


\begin{proof}
It is easy to see that $S_1=\left \{ ab,ba,aba,bab \right \},\, S_2=\left \{ a \right \}\cup S_1$ are subsemigroups of $S$. Suppose that $S$ does not have a subsemigroup of order 4. Then $\left | S_1 \right |\neq 4$ implies $\left | S_1 \right |\leq 3$, and $\left | S_2 \right |\neq  4$ implies $\left | S_2 \right |\leq 3$. Now $\left | \left \langle a,b \right \rangle \right |=\left | S_2\cup \left \{ b \right \} \right |\leq 4$ and $\left | \left \langle a,b \right \rangle \right |\neq 4$, so $\left | \left \langle a,b \right \rangle \right |\leq 3$. If $aba \notin \left \{ a,b,ab,ba \right \}$, $\left \langle a,b \right \rangle=\left \{ a,b,aba \right \}$ and $ab,ba \neq aba$ so $ab,ba \in \left \{ a,b \right \}\: \Rightarrow \: aba=(ab)(ba)\in \left \{ a,b,ab,ba \right \}$, a contradiction. Thus $aba \in \left \{ a,b,ab,ba \right \}$ and for the same reason, $bab \in \left \{ a,b,ab,ba \right \}$. 
\end{proof}

\vspace{3mm}

\begin{thm} \label{thm2.3}
Every idempotent semigroup $S$ of order $\geq 4$ has a subsemigroup of order $4$.
\end{thm}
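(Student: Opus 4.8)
The plan is to argue by induction on $|S|$, using the standard structure theory of bands: $S$ is a semilattice $Y$ of its $\mathcal D$-classes $(D_\alpha)_{\alpha\in Y}$, each $D_\alpha$ being a rectangular band $L_\alpha\times R_\alpha$. For $|S|=4$ there is nothing to do, $S$ being an order-$4$ subsemigroup of itself, so I would suppose $|S|\ge 5$ and assume the statement for all smaller bands. The first reduction is to dispose of the case that some $|D_\alpha|\ge 4$: if $|L_\alpha|,|R_\alpha|\ge 2$ then a $2\times 2$ block $\{l_1,l_2\}\times\{r_1,r_2\}$ is a subsemigroup of order $4$, while if $|L_\alpha|=1$ or $|R_\alpha|=1$ then $D_\alpha$ is a right- or left-zero band of order $\ge 4$ and any four of its elements form one. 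So henceforth every $D_\alpha$ has order $\le 3$, hence is a point or a left- or right-zero band, and in particular \emph{every} subset of a single $D_\alpha$ is a subsemigroup. Since $|S|\ge 5$, this forces $|Y|\ge 2$.

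Next, fix a maximal element $\mu$ of $Y$ and let $I_\mu=\bigcup_{\alpha\ne\mu}D_\alpha$ be the complement of the maximal class. Then $I_\mu$ is an ideal of $S$: for $y$ with $[y]\ne\mu$ and any $x$ one has $[xy]=[x]\wedge[y]\le[y]$, which cannot be the maximal element $\mu$; likewise $[yx]\ne\mu$. So $I_\mu$ is a subsemigroup with $1\le|I_\mu|<|S|$, and if $|I_\mu|\ge 4$ the inductive hypothesis applied to $I_\mu$ finishes the proof. Hence I may assume $|I_\mu|\le 3$ for every maximal $\mu$, so that $|S|\le 6$. If $\mu$ is the unique maximal element of $Y$, then $|I_\mu|=|S|-|D_\mu|\in\{2,3\}$ (it cannot equal $1$, else $|D_\mu|=|S|-1\ge 4$), and I would take $4-|I_\mu|$ elements of the left/right-zero block $D_\mu$ (there are enough, since $|D_\mu|=|S|-|I_\mu|\ge 4-|I_\mu|$) together with all of the ideal $I_\mu$; this is a subsemigroup of order exactly $4$. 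If instead $Y$ has two distinct maximal elements, I would set $S_0=\bigcap_{\mu\text{ maximal}}I_\mu$, an ideal, and check that $1\le|S_0|\le 2$, that $|D_\mu|+|S_0|\le 3$ for each maximal $\mu$, and that $D_\mu D_{\mu'}\subseteq S_0$ whenever $\mu\ne\mu'$ are maximal (their meet being strictly lower). A short numerical case split — on $|S_0|\in\{1,2\}$ and on whether some maximal class has order $2$ — then always produces an order-$4$ subsemigroup consisting of $S_0$ together with a few suitably chosen elements of two maximal classes.

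The hard part is this last bookkeeping: once every $\mathcal D$-class and every ``ideal complement'' $I_\mu$ has order $\le 3$, one must verify that a subsemigroup of order \emph{exactly} $4$ can always be assembled, and this is where the inductive bound $|S|\ge 5$ and the freedom to take arbitrary subsets of a left/right-zero block are used. Finally, I would note that Lemma \ref{lem2.2} can stand in for the structure theory if one wants a purely elementary proof: by it, either $ab\in\{a,b\}$ for all $a,b\in S$ — whence every subset of $S$ is a subsemigroup and any four elements suffice — or else $\langle a,b\rangle=\{a,b,ab\}$ has exactly three elements for some $a,b$; one then classifies this three-element subsemigroup and attaches a fourth element of $S$ to it using $aba,bab\in\{a,b,ab,ba\}$, at the cost of a longer case analysis, which is presumably why the lemma was isolated beforehand.
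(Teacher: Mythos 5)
Your argument is correct, but it takes a genuinely different route from the paper. The paper works from scratch: assuming no subsemigroup of order $4$, it starts from a two-element subsemigroup $\{a,b\}$ (Theorem \ref{thm2.1}), uses Lemma \ref{lem2.2} to force $|\langle x,y\rangle|\le 3$ and $xyx,yxy\in\{x,y,xy,yx\}$, produces $c$ with $|\langle a,b,c\rangle|\ge 4$, and kills five explicit configurations of $\langle a,c\rangle$ and $\langle b,c\rangle$ by hand. You instead invoke the structure theorem for bands (a band is a semilattice $Y$ of rectangular bands) and induct on $|S|$: a $\mathcal{D}$-class of order $\ge 4$ contains a $2\times 2$ block or four elements of a left/right-zero band; otherwise every class has order $\le 3$, hence is left- or right-zero with all subsets closed, the complement $I_\mu$ of a maximal class is an ideal, and either induction applies to $I_\mu$ or the numbers are small enough to assemble an order-$4$ subsemigroup of the form (ideal) $\cup$ (elements of maximal classes). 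I checked that your closing bookkeeping does go through: with a unique maximal class, $I_\mu\cup T$ works for any $(4-|I_\mu|)$-element subset $T\subseteq D_\mu$; with several maximal classes, products across distinct maximal classes land in $S_0$, so $S_0$ plus two singleton maximal classes works when $|S_0|=2$, while for $|S_0|=1$ you take $S_0$ plus a two-element maximal class plus one further idempotent, or $S_0$ plus \emph{three} singleton maximal classes --- this last subcase needs three maximal classes rather than the ``two'' you wrote, a harmless slip since $\sum_\mu |D_\mu|\ge 4$ there. The trade-off: your proof is shorter and conceptually transparent but leans on the McLean/Clifford decomposition, which the paper deliberately avoids; the paper's proof is longer and more computational but self-contained, and its Lemma \ref{lem2.2} is reused in the later order-$6$ analysis, which your structural reduction does not by itself supply.
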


\begin{proof}
Suppose that $S$ does not have a subsemigroup of order 4. By Theorem \ref{thm2.1}, $S$ has a subsemigroup $\left \{ a,b \right \}$ of order 2. Suppose that $\left | \left \langle a,b,c \right \rangle \right |=3$ for every $c \in S-\left \{ a,b \right \}$. Let $c$ and $d$ be two different elements of $S-\left \{ a,b \right \}$. Then $4\leq \left | \left \{ a,b \right \}\cup \left \langle c,d \right \rangle\right | \leq \left | \left \{ a,b \right \} \right |+\left | \left \langle c,d \right \rangle \right | \leq 5$ by Lemma \ref{lem2.2} and $\left \{ a,b \right \}\cup \left \langle c,d \right \rangle$ is a subsemigroup of $S$, so by the assumption $\left | \left \{ a,b \right \}\cup \left \langle c,d \right \rangle \right |=5$ and $\left \{ a,b \right \}\cap \left \langle c,d \right \rangle=\phi$. Again by theorem \ref{thm2.1}, there exists a subsemigroup $P$ of $\left \langle c,d \right \rangle$ such that $\left | P \right |=2$. Then $\left \{ a,b \right \}\cup P$ is a subsemigroup of $\left \langle c,d \right \rangle$ of order 4, a contradiction. Thus there exists $c \in S$ such that $\left | \left \langle a,b,c \right \rangle \right |\geq 4$. By Lemma \ref{lem2.2}, we have the following 5 cases. \\
\\
(i) $\left \langle a,c \right \rangle=\left \{ a,c \right \}$ (or, $\left \langle b,c \right \rangle=\left \{ b,c \right \}$) : $\left \langle a,b,c \right \rangle=\left \{ a,b,c,bc,cb \right \}=\left \{ a \right \}\cup \left \langle b,c \right \rangle$, so $4\leq \left | \left \langle a,b,c \right \rangle \right |\leq\left | \left \{ a \right \} \right |+\left | \left \langle b,c \right \rangle \right |\leq  4$ by Lemma \ref{lem2.2}, a contradiction. \\
(ii) $\left \langle a,c \right \rangle=\left \{ a,c,ac \right \},\, \left \langle b,c \right \rangle=\left \{ b,c,bc \right \}$ : $\left \langle a,b,c \right \rangle=\left \{ a,b,c,ac,bc \right \}=\left \{ c \right \}\cup \left \{ a,b,ac,bc \right \}$. If $ax=c$ or $bx=c$ for some $x \in S$, then $ac=c$ or $bc=c$, a contradiction. Thus $S_1=\left \{ a,b,ac,bc \right \}$ is a subsemigroup of $\left \langle a,b,c \right \rangle$. $\left | S_1 \right |\neq 4$ implies $\left | S_1 \right |\leq 3$ and $\left | \left \langle a,b,c \right \rangle \right |\leq 4$, a contradiction. \\
(iii) $\left \langle a,c \right \rangle=\left \{ a,c,ac \right \},\, \left \langle b,c \right \rangle=\left \{ b,c,cb \right \}$ : $\left \langle a,b,c \right \rangle=\left \{ a,b,c,ac,cb,acb \right \}$ and it is easy to see that $S_2=\left \{ c,ac,cb,acb \right \},\, S_3=\left \{ b \right \}\cup S_2$ are subsemigroups of $\left \langle a,b,c \right \rangle$. $\left | S_2 \right |\neq 4$ implies $\left | S_2 \right |\leq 3$, and $\left | S_3 \right |\neq4$ implies $\left | S_3 \right |\leq 3$. Thus $\left | \left \langle a,b,c \right \rangle \right |\leq 4$, a contradiction. \\
(iv) $\left \langle a,c \right \rangle=\left \{ a,c,ca \right \},\, \left \langle b,c \right \rangle=\left \{ b,c,bc \right \}$ : symmetric to (iii). \\
(v) $\left \langle a,c \right \rangle=\left \{ a,c,ca \right \},\, \left \langle b,c \right \rangle=\left \{ b,c,cb \right \}$ : symmetric to (ii). 
\end{proof}

\vspace{3mm}

Now we will prove that every idempotent semigroup of order $\geq 6$ has a subsemigroup of order 6. In order to prove this, we will first establish some lemmas. 

\vspace{3mm}

\begin{lem} \label{lem2.4a}
Suppose that an idempotent semigroup $S$ of order $\geq 6$ does not have a subsemigroup of order $6$. If $M$ is a subsemigroup of $S$ of order $4$, then there exists $x \in S-M$ such that $\left | \left \langle M\cup \left \{ x \right \} \right \rangle \right |\geq 6$. 
\end{lem}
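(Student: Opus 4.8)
The plan is to argue by contradiction. Suppose $|\langle M\cup\{x\}\rangle|<6$ for every $x\in S-M$. Since $\langle M\cup\{x\}\rangle\supseteq M\cup\{x\}$ and $|M\cup\{x\}|=5$ (because $x\notin M$ and $|M|=4$), this forces $\langle M\cup\{x\}\rangle=M\cup\{x\}$; that is, $M\cup\{x\}$ is a subsemigroup of $S$ of order $5$ for every $x\in S-M$. From this I want to produce a subsemigroup of $S$ of order $6$, contradicting the hypothesis.

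The first step is an elementary observation: for distinct $p,q\in S-M$, the set $M\cup\{p,q\}$ is a subsemigroup of $S$ if and only if $pq\in M\cup\{p,q\}$ and $qp\in M\cup\{p,q\}$. Indeed $M$ is closed, $p^2=p$ and $q^2=q$, and $Mp,pM\subseteq M\cup\{p\}$, $Mq,qM\subseteq M\cup\{q\}$ since $M\cup\{p\}$ and $M\cup\{q\}$ are subsemigroups; so the only products that could escape $M\cup\{p,q\}$ are $pq$ and $qp$. Since $|M\cup\{p,q\}|=6$, the hypothesis on $S$ tells us $M\cup\{p,q\}$ is \emph{never} a subsemigroup, so for all distinct $p,q\in S-M$ at least one of $pq,qp$ lies outside $M\cup\{p,q\}$.

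Now, as $|S|\ge 6=|M|+2$, choose distinct $u,v\in S-M$; after possibly interchanging $u$ and $v$ we may assume $uv\notin M\cup\{u,v\}$. In particular $uv\in S-M$ and $uv\ne u$, so $|M\cup\{u,uv\}|=6$. Apply the observation with $p=u$, $q=uv$: one has $u\cdot(uv)=uv\in\{u,uv\}$ automatically, so $M\cup\{u,uv\}$ is a subsemigroup exactly when $(uv)u=uvu$ lies in $M\cup\{u,uv\}$ — and if it does, we have our order-$6$ subsemigroup and hence a contradiction. Otherwise $uvu\notin M$ and $uvu\notin\{u,uv\}$; then $u$ and $uvu$ are distinct elements of $S-M$, so $|M\cup\{u,uvu\}|=6$, and since $u\cdot(uvu)=uvu$ and $(uvu)\cdot u=uvu$ (using $u^2=u$), both products stay in $\{u,uvu\}$, so by the observation $M\cup\{u,uvu\}$ is a subsemigroup of order $6$ — again a contradiction. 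This finishes every case.

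The point I expect to matter is the first one. The naive attempt — adjoining two elements of $S-M$ directly to $M$ — is doomed, since $M\cup\{u,v\}$ is precisely a forbidden $6$-element subset; the idea is instead to adjoin to $\{u\}$ the derived element $uv$ (or, if that fails, $uvu$), whose one-sided products with $u$ are forced back into the two-element set by the band identities $u^2=u$ and $(uv)^2=uv$. Everything else is routine manipulation of the five-element subsemigroups $M\cup\{u\}$, $M\cup\{uv\}$, and $M\cup\{uvu\}$ provided by the contradiction hypothesis.
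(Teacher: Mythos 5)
Your proof is correct, and it shares the paper's opening move: negating the conclusion forces $M\cup \left \{ x \right \}$ to be a five-element subsemigroup for every $x \in S-M$, after which one manufactures an order-$6$ subsemigroup of the form $M$ plus two elements of $S-M$. The endgame, however, is arranged differently. The paper takes arbitrary distinct $a,b \in S-M$, notes that $M\cup \left \{ ab,aba \right \}$ and $M\cup \left \{ a,ab,aba \right \}$ are subsemigroups, uses the order bound $\leq 5$ to force $ab,aba \in M\cup \left \{ a \right \}$ and, symmetrically, $ba,bab \in M\cup \left \{ b \right \}$, and concludes that $M\cup \left \{ a,b \right \}$ itself is closed, hence is the forbidden order-$6$ subsemigroup. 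You instead normalize (after a harmless swap) so that $uv \notin M\cup \left \{ u,v \right \}$ and then exhibit the order-$6$ subsemigroup directly as $M\cup \left \{ u,uv \right \}$ or $M\cup \left \{ u,uvu \right \}$, exploiting that idempotency forces $u(uv)$, $u(uvu)$ and $(uvu)u$ back into the adjoined pair, so only the single product $(uv)u$ ever needs a case split; your closure criterion for $M\cup \left \{ p,q \right \}$ with $p,q \in S-M$ plays the role that the paper's auxiliary sets play. Your route avoids the symmetric argument in $b$ and the element $bab$ entirely, at the cost of the WLOG step; both arguments are of comparable length and rest on the same underlying mechanism, so this is a clean, slightly more direct variant rather than a fundamentally new proof.
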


\begin{proof}
Suppose that $\left | \left \langle M\cup \left \{ x \right \} \right \rangle \right |=5$ for every $x \in S-M$. Let $a$ and $b$ be two distinct elements of $S-M$. Then $S_1=M\cup \left \{ ab,aba \right \}$ and $S_2=S_1\cup \left \{ a \right \}$ are subsemigroups of $S$. $\left | S_1 \right |\neq 6$ implies $\left | S_1 \right |\leq 5$, and $\left | S_2 \right |\neq 6$ implies $\left | S_2 \right |\leq 5$. Thus $ab,aba \in M\cup \left \{ a \right \}$ and for the same reason, $ba,bab \in M\cup \left \{ b \right \}$. Then $M\cup \left \langle a,b \right \rangle=M\cup \left \{ a,b \right \}$ is a subsemigroup of $S$ of order 6, a contradiction.
\end{proof}

\vspace{3mm}

\begin{lem} \label{lem2.4}
If an idempotent semigroup $S$ of order $\geq 6$ does not have a subsemigroup of order $6$ and $\forall a,b,c \in S\:\: \left | \left \langle a,b,c \right \rangle \right |\leq 5$, then $\forall a,b \in S \:\: \left | \left \langle a,b \right \rangle \right |\leq 3$. Also, there exist $a,b,c \in S$ such that $\left | \left \langle a,b \right \rangle \right |=3$ and $\left | \left \langle a,b,c \right \rangle \right |=4$. 
\end{lem}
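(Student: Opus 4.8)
I would treat the two assertions separately. For the first, argue by contradiction: applying the hypothesis with $c\in\{a,b\}$ already gives $|\langle a,b\rangle|\le 5$ for all $a,b\in S$, so if some pair satisfied $|\langle a,b\rangle|\ge 4$ then $|\langle a,b\rangle|\in\{4,5\}$. If $|\langle a,b\rangle|=5$, then since $|S|\ge 6$ there is $c\in S\setminus\langle a,b\rangle$, and $\langle a,b,c\rangle\supseteq\langle a,b\rangle\cup\{c\}$ has at least $6$ elements, contradicting $|\langle a,b,c\rangle|\le 5$. If $|\langle a,b\rangle|=4$, put $M=\langle a,b\rangle$; Lemma \ref{lem2.4a} gives $x\in S\setminus M$ with $|\langle M\cup\{x\}\rangle|\ge 6$, but $\langle M\cup\{x\}\rangle=\langle a,b,x\rangle$, again a contradiction. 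Hence $|\langle a,b\rangle|\le 3$ for all $a,b\in S$.

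For the second assertion, first observe that some $2$-generated subsemigroup has order exactly $3$: otherwise $\langle a,b\rangle=\{a,b\}$ for all distinct $a,b$, so every subset of $S$ is a subsemigroup and any $6$-element subset is one of order $6$, a contradiction. Fix $T=\langle a,b\rangle$ with $|T|=3$. By Theorem \ref{thm2.3}, $S$ has a subsemigroup $M$ of order $4$, which by the first assertion is not $2$-generated. If $M$ has a generating set $\{p,q,r\}$ of three (necessarily distinct) elements, then some pair among them, say $\langle p,q\rangle$, has order $\ge 3$ — for if $\langle p,q\rangle$, $\langle q,r\rangle$, $\langle p,r\rangle$ all had order $\le 2$ then $\{p,q,r\}$ would be closed under multiplication and $|M|\le 3$ — so $|\langle p,q\rangle|=3$ by the first assertion while $|\langle p,q,r\rangle|=|M|=4$, and $(p,q,r)$ is the required triple.

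There remains the case where every order-$4$ subsemigroup of $S$ needs all four of its elements to generate it; it is easy to see this is equivalent to: every order-$4$ subsemigroup $M$ of $S$ is \emph{totally decomposable}, meaning every subset of $M$ is a subsemigroup, so any two elements of $M$ generate a subsemigroup of order $\le 2$. Then no order-$4$ subsemigroup contains $T$ (its generators $a,b$ would lie in a totally decomposable order-$4$ subsemigroup, forcing $|\langle a,b\rangle|\le 2$), so for each $c\in S\setminus T$ the subsemigroup $\langle a,b,c\rangle$ contains $T\cup\{c\}$ but is not an order-$4$ subsemigroup, whence $|\langle a,b,c\rangle|=5$. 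Fixing such a $c$ and writing $U=\langle a,b,c\rangle$, so that $U=T\cup\{c,c'\}$ with $c,c'\notin T$, Theorem \ref{thm2.3} provides an order-$4$ subsemigroup $V$ of $U$; as $V$ is totally decomposable it does not contain $T$, hence $V=U\setminus\{u_0\}$ where $u_0$ is a generator of $T$ (if $u_0$ were the third element of $T$ then $T\setminus\{u_0\}=\{a,b\}$ would be a subsemigroup, contradicting $|\langle a,b\rangle|=3$), and $V\cap T=T\setminus\{u_0\}$ is a $2$-element subsemigroup.

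The final step — which I expect to be the main obstacle — is a case analysis, inside the order-$5$ semigroup $U$, of the products between the missing generator $u_0$ and the elements of the totally decomposable set $V$. The constraints here are rigid ($V$ and $T$ are subsemigroups, $|\langle a,b\rangle|=3$, $|U|=5$, and $S$ has no subsemigroup of order $6$), and the analysis — in the style of the cases in the proof of Theorem \ref{thm2.3} — should in each case either exhibit another $2$-generated subsemigroup of order $3$ that extends to an order-$4$ subsemigroup, or produce a subsemigroup of order $6$, completing the argument by contradiction. Everything outside this case analysis is routine.
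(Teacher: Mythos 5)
Your reductions are sound, and the easy half is handled essentially as in the paper: the first assertion is proved by the same two-step argument (an outside element when $|\langle a,b\rangle|=5$, Lemma \ref{lem2.4a} when $|\langle a,b\rangle|=4$), and your dichotomy via Theorem \ref{thm2.3} — either some order-$4$ subsemigroup is generated by three of its elements (done), or every order-$4$ subsemigroup is totally decomposable — is a legitimate and mildly different framing from the paper, which instead fixes a pair $x,y$ with $|\langle x,y\rangle|=3$, assumes $|\langle x,y,z\rangle|=5$ for every $z\notin\langle x,y\rangle$, and normalizes the fifth element to $xz$ or $zx$. But the proposal is not a proof: the step you yourself call ``the main obstacle'' — the analysis of the order-$5$ semigroup $U=T\cup\{c,c'\}$ — is exactly where the whole content of the second assertion lives, and you give no argument for it, only the expectation that a case analysis ``should'' work. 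In the paper this step is the bulk of the lemma: two full multiplication tables (Tables \ref{tab:1} and \ref{tab:2}) plus a delicate sub-case split (for instance the sub-case $zxy=y$, $yzx=z$ requires switching to the different witnessing pair $\langle z,xy\rangle$ inside $\{y,xy,z,xz\}$). Nothing in your extra structure (total decomposability of $V$) visibly trivializes this, so the core of the lemma is simply missing.

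Two remarks about how the deferred step would have to go. In your remaining case the lemma's conclusion cannot be realized at all: a triple with $|\langle x,y\rangle|=3$ and $|\langle x,y,z\rangle|=4$ yields an order-$4$ subsemigroup containing the non-closed pair $\{x,y\}$, contradicting the case hypothesis that all order-$4$ subsemigroups are totally decomposable. So your case analysis must end in an outright contradiction; and since $|U|=5$, no subsemigroup of order $6$ can be found inside $U$, so the only route that stays inside $U$ is to exhibit there an order-$4$ subsemigroup that is not totally decomposable — which is precisely the configuration the paper hunts down table by table. This is achievable (the paper's analysis shows every such $5$-element semigroup contains such a configuration, so your plan is not doomed), but until that analysis is actually carried out, with the products of the missing generator $u_0$ against $V$ pinned down case by case, the proof of the second assertion has a genuine gap.
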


\begin{proof}
If there exist $a,b \in S$ such that $\left | \left \langle a,b \right \rangle \right |\geq 5$, then it is trivial that there exist $a,b,c \in S$ such that $\left | \left \langle a,b,c \right \rangle \right |\geq 6$. Suppose that $\left | \left \langle a,b \right \rangle \right |=4$. Then by Lemma \ref{lem2.4a}, there exists $c \in S-\left \langle a,b \right \rangle$ such that $\left | \left \langle a,b,c \right \rangle \right |\geq 6$, a contradiction. \\
If $\left | \left \langle x,y \right \rangle \right |=2$ for every $x,y \in S$, then every subset of $S$ of order 6 is a subsemigroup of $S$, a contradiction. So there exist $x,y \in S$ such that $\left | \left \langle x,y \right \rangle \right |=3$. Suppose that for every $z \in S-\left \langle x,y \right \rangle$, $\left | \left \langle x,y,z \right \rangle \right |=5$. Without loss of generality, let $\left \langle x,y,z \right \rangle$ be $\left \{ x,y,xy,z,xz \right \}$ or $\left \{ x,y,xy,z,zx \right \}$. \\
\\
(i) $\left \langle x,y,z \right \rangle  =\left \{ x,y,xy,z,xz \right \}$ : From the table \ref{tab:1}, if $zxy \neq y$, then $\left \{ x,xy,z,xz \right \}$ is a subsemigroup of $S$. Thus $\left | \left \langle x,z \right \rangle \right |=3$ and $\left | \left \langle x,z,xy \right \rangle \right |=4$. For the same reason, $yzx \neq z$ implies $\left | \left \langle x,y \right \rangle \right |=3$ and $\left | \left \langle x,y,xz \right \rangle \right |=4$. If $zxy=y$ and $yzx=z$, then $\left \{ y,xy,z,xz \right \}$ is a subsemigroup of $S$ and $z(xy)=y$. Thus $\left | \left \langle z,xy \right \rangle \right |=3$ and $\left | \left \langle z,xy,xz \right \rangle \right |=4$. \\
\begin{table}
\small
\centering
\setlength\tabcolsep{2.5pt}
\begin{tabular}{c"c|c|c|c|c}
    $*$ & $x$ & $y$ & $xy$ & $z$ & $xz$ \\\thickhline
    $x$ & $x$ & $xy$ & $xy$ & $xz$ & $xz$ \\\hline
    $y$ & $x,y,xy$ & $y$ & $y,xy$ & $y,xy,z,xz$ & $yzx$ \\\hline
    $xy$ & $x,xy$ & $xy$ & $xy$ & $xy,xz$ & $xy,xz$ \\\hline
    $z$ & $x,z,xz$ & $y,xy,z,xz$ & $zxy$ & $z$ & $z,xz$ \\\hline
    $xz$ & $x,xz$ & $xy,xz$ & $xy,xz$ & $xz$ & $xz$
\end{tabular}
\caption{$\left \langle x,y,z \right \rangle  =\left \{ x,y,xy,z,xz \right \}$}
\label{tab:1} 
\end{table}
\\
(ii) $\left \langle x,y,z \right \rangle  =\left \{ x,y,xy,z,zx \right \}$ : From the table \ref{tab:2}, $zxy \neq y$ implies $\left | \left \langle x,z \right \rangle \right |=3$ and $\left | \left \langle x,z,zx \right \rangle \right |=4$. Also $zxy \neq z$ implies $\left | \left \langle x,y \right \rangle \right |=3$ and $\left | \left \langle x,y,zx \right \rangle \right |=4$.
\begin{table}
\small
\centering
\setlength\tabcolsep{2.5pt}
\begin{tabular}{c"c|c|c|c|c}
    $*$ & $x$ & $y$ & $xy$ & $z$ & $zx$ \\\thickhline
    $x$ & $x$ & $xy$ & $xy$ & $x,z,xz$ & $x,zx$ \\\hline
    $y$ & $x,y,xy$ & $y$ & $y,xy$ & $y,xy,z,zx$ & $x,y,xy,zx$ \\\hline
    $xy$ & $x,xy$ & $xy$ & $xy$ & $x,xy,z,zx$ & $x,xy,zx$ \\\hline
    $z$ & $zx$ & $y,xy,z,zx$ & $zxy$ & $z$ & $zx$ \\\hline
    $zx$ & $zx$ & $zxy$ & $zxy$ & $z,zx$ & $zx$
\end{tabular}
\caption{$\left \langle x,y,z \right \rangle  =\left \{ x,y,xy,z,zx \right \}$}
\label{tab:2} 
\end{table}
\end{proof}

\vspace{3mm}

\begin{lem} \label{lem2.5}
If $a$, $b$ and $c$ are distinct elements of an idempotent semigroup $S$ and $\left | \left \langle a,b,c \right \rangle \right |\leq 5$, then  $abc \in \left \{ a,b,c,ab,ac,bc \right \}$.
\end{lem}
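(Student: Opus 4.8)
The plan is to argue by contradiction: assume $abc \notin \{a,b,c,ab,ac,bc\}$ and show that this forces $\langle a,b,c\rangle$ to contain at least six distinct elements, contradicting $|\langle a,b,c\rangle|\leq 5$.

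First I would pin down the element $ab$. Writing $abc = (ab)c$, I would run through the three possibilities $ab=a$, $ab=b$, $ab=c$; using the idempotent law $x^2=x$ (for $c$ in the last case) these give $abc=ac$, $abc=bc$, and $abc=c$ respectively, each contradicting the assumption on $abc$. Hence $ab\notin\{a,b,c\}$. Since also $abc\neq ab$, the five elements $a,b,c,ab,abc$ are pairwise distinct (this is where distinctness of $a,b,c$ enters), and as $|\langle a,b,c\rangle|\leq 5$ we conclude $\langle a,b,c\rangle=\{a,b,c,ab,abc\}$ exactly.

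Next I would locate $bc$, which must lie in this five‑element set. Using $abc=a(bc)$ I would test $bc\in\{a,b,c,abc\}$: the cases $bc=a$, $bc=b$, $bc=c$ give (via $a^2=a$) $abc=a$, $abc=ab$, $abc=ac$, all contradicting the assumption on $abc$, while $bc=abc$ is excluded because $abc\neq bc$. The only remaining possibility is $bc=ab$, but then $abc=a(bc)=a(ab)=(aa)b=ab$, contradicting $abc\neq ab$. This exhausts all cases and completes the proof.

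I do not expect a genuine obstacle here: the argument is short bookkeeping once one decides to read $abc$ both as $(ab)c$ and as $a(bc)$. The only points requiring care are making the two small case analyses genuinely exhaustive and applying $x^2=x$ and the distinctness of $a,b,c$ at exactly the right spots.
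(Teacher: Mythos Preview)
Your argument is correct. The paper's proof is organized a little differently: rather than first showing $ab\notin\{a,b,c\}$ and then identifying the whole subsemigroup as $\{a,b,c,ab,abc\}$, the paper observes directly that if $abc\notin\{a,b,c,ab,ac,bc\}$ then the six elements $a,b,c,ab,ac,bc$ can occupy at most four slots, which (since $a,b,c$ are distinct) forces one of $ab\in\{a,b,c\}$, $bc\in\{a,b,c\}$, or $ab=bc$; each alternative immediately yields $abc\in\{a,b,c,ab,ac,bc\}$. Both arguments rest on reading $abc$ as $(ab)c$ and as $a(bc)$ and doing a short case check, so the difference is purely organizational. The paper's version is a touch shorter because it never needs to name the full subsemigroup; yours has the minor advantage of making explicit that $\langle a,b,c\rangle=\{a,b,c,ab,abc\}$ along the way.
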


\begin{proof}
Suppose that $abc \notin \left \{ a,b,c,ab,ac,bc \right \}$. Then $\left | \left \{ a,b,c,ab,ac,bc \right \} \right |\leq 4$, so either $ab \in \left \{ a,b,c \right \}$, $bc \in \left \{ a,b,c \right \}$ or $ab=bc$. Each of them implies $abc \in \left \{ ac,bc,c \right \}$, $abc \in \left \{ a,ab,ac \right \}$ and $abc=bc$, respectively. 
\end{proof}

\vspace{3mm}

\begin{lem} \label{lem2.5a}
Let $a$ and $b$ be two different elements of an idempotent semigroup $S$ such that $\left | \left \langle a,b \right \rangle \right |=4$. Then $\left \langle a,b \right \rangle =\left \{ a,b,ab,ba \right \}$ and exactly one of the following holds. \\
(1) $aba=a$, $bab=b$ \\
(2) $aba=ab$, $bab=ba$ \\
(3) $aba=ba$, $bab=ab$
\end{lem}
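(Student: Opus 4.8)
The plan is to pin down the underlying set of $\langle a,b\rangle$ exactly, and only afterwards read off the three possibilities for the pair $(aba,bab)$. The key point is that once $a,b,ab,ba$ are known to be four \emph{distinct} elements of $\langle a,b\rangle$, the equality $\langle a,b\rangle=\{a,b,ab,ba\}$ is forced by $\left|\langle a,b\rangle\right|=4$, and then $aba$ and $bab$ automatically lie in $\{a,b,ab,ba\}$. In particular the general fact (from Green--Rees) that a two-generated band sits inside $\{a,b,ab,ba,aba,bab\}$ will not even be needed.

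\textbf{Step 1: $a,b,ab,ba$ are distinct.} We are given $a\neq b$. If $ab=ba$, then $\langle a,b\rangle$ is a commutative band, hence a semilattice, so every product of $a$'s and $b$'s collapses to one of $a,b,ab$ and $\left|\langle a,b\rangle\right|\le 3$, a contradiction; thus $ab\neq ba$. Next I rule out each of $a=ab$, $a=ba$, $b=ab$, $b=ba$. For instance, from $a=ab$ one gets $aba=(ab)a=a\cdot a=a$ and $bab=b(ab)=b\cdot a=ba$, after which a direct check of the few relevant products shows that $\{a,b,ba\}$ is a subsemigroup of $S$ containing $a$ and $b$, so $\left|\langle a,b\rangle\right|\le 3$, again a contradiction; the remaining three coincidences are symmetric under $a\leftrightarrow b$ and likewise confine $\langle a,b\rangle$ to a three-element subsemigroup. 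Hence $a,b,ab,ba$ are distinct, and $\left|\langle a,b\rangle\right|=4$ gives $\langle a,b\rangle=\{a,b,ab,ba\}$.

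\textbf{Step 2: locating $aba$ and $bab$.} Now $aba\in\{a,b,ab,ba\}$, and $aba\neq b$, since $aba=b$ would give $ab=a\cdot b=a\cdot aba=aba=b$, contradicting Step 1. So $aba\in\{a,ab,ba\}$, and I treat the three cases, using $(ab)^2=ab$ and $(ba)^2=ba$ freely. If $aba=ab$, then $bab=b\cdot ab=b\cdot aba=(ba)^2=ba$, which is case (2). If $aba=ba$, then $bab=ba\cdot b=aba\cdot b=(ab)^2=ab$, which is case (3). If $aba=a$, I claim $bab=b$: as $bab\in\{a,b,ab,ba\}$ it suffices to exclude the three other values, and each is a one-line computation contradicting Step 1 — $bab=a$ gives $bab=ba$ (multiply by $b$ on the left), hence $a=ba$; $bab=ab$ gives $ba=(ba)^2=(bab)a=(ab)a=aba=a$; and $bab=ba$ gives $ab=(ab)^2=a(bab)=a(ba)=aba=a$. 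Hence $bab=b$, which is case (1).

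\textbf{Conclusion and the main obstacle.} Since $a,ab,ba$ are pairwise distinct, $aba$ equals exactly one of them, so exactly one of (1), (2), (3) holds. The whole argument is elementary; the only real labour is careful bookkeeping — carrying out the "multiply on one side, then collapse a square" manipulations without slips, and, in Step 1, checking that each forbidden coincidence genuinely yields a three-element subsemigroup (just a handful of products). I expect the one mildly delicate point to be the subcase $aba=a$ of Step 2, where $bab=b$ does not come for free but must be extracted by eliminating all three competing values of $bab$, in contrast with the other two subcases where $bab$ is pinned down in a single step.
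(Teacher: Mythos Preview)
Your proof is correct and follows essentially the same route as the paper's. Both arguments first force $\langle a,b\rangle=\{a,b,ab,ba\}$ by ruling out coincidences among $a,b,ab,ba$, then pin down $aba\in\{a,ab,ba\}$ and match each value to the corresponding value of $bab$; the only cosmetic difference is that the paper packages Step~2 as three two-sided equivalences (so that $aba=a\Leftrightarrow bab=b$ falls out by elimination), whereas you treat the case $aba=a$ by explicitly excluding the other three values of $bab$.
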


\begin{proof}
If $\left | \left \{ a,b,ab,ba \right \} \right | \leq3$, then $aba,bab \in \left \{ a,b,ab,ba \right \}$ and $\left | \left \langle a,b \right \rangle \right | \leq3$, a contradiction. Thus $\left \langle a,b \right \rangle =\left \{ a,b,ab,ba \right \}$. $aba=b$ implies $ab=b$, which is a contradiction, so $aba \in \left \{ a,ab,ba \right \}$ and for the same reason, $bab \in \left \{ b,ab,ba \right \}$. Now $aba=ab\: \Leftrightarrow \: bab=ba$, $aba=ba\: \Leftrightarrow \: bab=ab$ and $aba=a\: \Leftrightarrow \: aba \notin \left \{ ab,ba \right \} \: \Leftrightarrow \: bab \notin \left \{ ab,ba \right \}\: \Leftrightarrow \: bab=b$. 
\end{proof}

\vspace{3mm}

\begin{thm} \label{thm2.6}
Every idempotent semigroup $S$ of order $\geq 6$ has a subsemigroup of order $6$.
\end{thm}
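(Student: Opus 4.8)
The plan is to argue by contradiction via a minimal counterexample. Suppose some idempotent semigroup of order $\geq 6$ has no subsemigroup of order $6$, and let $S$ be one of minimal order $m$. An idempotent semigroup of order $6$ is a subsemigroup of order $6$ of itself, so $m \geq 7$; and every proper subsemigroup of $S$ has order $\leq 5$, since a proper subsemigroup of order exactly $6$ is forbidden by hypothesis, and one of order $\geq 7$ would have no subsemigroup of order $6$ either and hence be a smaller counterexample. Everything then reduces to deriving a contradiction from $m \geq 7$.

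The first simplification is that every $2$-generated subsemigroup of $S$ has order $\leq 5$: by the Green--Rees formula $\left | \left \langle a,b \right \rangle \right | \leq I_2 = 6$, with equality only when $a,b,ab,ba,aba,bab$ are distinct, in which case $\left \langle a,b \right \rangle$ would already be a subsemigroup of order $6$. Next, by Theorem \ref{thm2.3} fix a subsemigroup $M$ of order $4$; by Lemma \ref{lem2.4a} there is $x \in S-M$ with $\left | \left \langle M\cup \left \{ x \right \} \right \rangle \right | \geq 6$, and minimality forces $\left \langle M\cup \left \{ x \right \} \right \rangle = S$, so $S$ is generated by at most five elements. Finally, split into two cases according to whether $S$ has a $3$-generated subsemigroup of order $\geq 6$. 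If it does, that subsemigroup cannot be proper, so $S = \left \langle a,b,c \right \rangle$ with $m = \left | \left \langle a,b,c \right \rangle \right | \geq 7$. If it does not, then every $3$-generated subsemigroup has order $\leq 5$, so Lemma \ref{lem2.4} applies: every $2$-generated subsemigroup has order $\leq 3$, and there are $a,b,c$ with $\left | \left \langle a,b \right \rangle \right | = 3$ and $\left | \left \langle a,b,c \right \rangle \right | = 4$.

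In the first case I would carry out a multiplication-table analysis for the generators $a,b,c$ in the spirit of Tables \ref{tab:1} and \ref{tab:2}: using Lemma \ref{lem2.5a} to normalise each of $\left \langle a,b \right \rangle$, $\left \langle a,c \right \rangle$, $\left \langle b,c \right \rangle$ (all of order $\leq 5$) and tracking the admissible values of the mixed products $ab, ba, ac, ca, bc, cb$ and then of the longer products, the goal is to show that under the standing hypotheses every product already belongs to a fixed set of at most six elements, forcing $m \leq 6$. In the second case I would start from $M = \left \langle a,b,c \right \rangle$ of order $4$ and apply Lemma \ref{lem2.4a} once more to get $d \in S-M$ with $\left \langle M\cup \left \{ d \right \} \right \rangle = S$, so that $S = \left \langle a,b,c,d \right \rangle$ with every pair of generators generating $\leq 3$ elements and every triple generating $\leq 5$; here Lemma \ref{lem2.5} confines each triple product to $\left \{ u,v,w,uv,uw,vw \right \}$, and iterating this control over the products of $a,b,c,d$ should again force $m \leq 6$. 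Either conclusion $m \leq 6$ contradicts $m \geq 7$.

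The main obstacle is the first case: a subsemigroup generated by three elements can a priori have as many as $I_3 = 159$ elements, so its multiplication table cannot be enumerated blindly. The real work is to use the two standing restrictions --- that no $2$-generated subsemigroup has order $6$ and that $S$ has no subsemigroup of order $6$ at all --- to prune the case tree to a short finite list, exactly as the proof of Lemma \ref{lem2.4} already does in the $5$-element situation. A useful auxiliary observation along the way is that a rectangular band of order $\geq 6$ always contains a subsemigroup of order exactly $6$ (choose the sizes of its two factors appropriately), so the maximal rectangular subsemigroups of $S$ may be assumed small from the outset. Organising this bookkeeping so that it visibly terminates, and checking that in the second case the ``adjoin one generator'' step cannot overshoot order $6$, is where the bulk of the effort lies.
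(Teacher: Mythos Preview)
Your high-level decomposition matches the paper exactly: the same contradiction hypothesis, the same split into ``all $3$-generated subsemigroups have order $\leq 5$'' (the paper's Case~I, your second case) versus ``some $3$-generated subsemigroup has order $\geq 7$'' (Case~II, your first case), and the same use of Lemmas~\ref{lem2.4a}, \ref{lem2.4}, \ref{lem2.5} and \ref{lem2.5a} as the scaffolding. The minimal-counterexample framing is a harmless addition; the paper does not use it, but it does no damage.

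The genuine gap is that your plan for the hard case (three generators, order $\geq 7$) does not identify the mechanism that actually makes the argument terminate. You propose to ``track admissible values of the mixed products'' until ``every product already belongs to a fixed set of at most six elements''. That is not how the paper proceeds, and it is not clear such a direct bound is obtainable: in the paper's analysis $\left \langle a,b,c \right \rangle$ is allowed to have as many as nine or even fifteen listed elements (Tables~\ref{tab:6}, \ref{tab:9}, \ref{tab:10}). What the paper does instead is build an explicit \emph{chain} of subsemigroups $T_0\subset T_1\subset\cdots\subset \left \langle a,b,c \right \rangle$ with $\left | T_{i+1} \right | \leq \left | T_i \right | +1$; the no-order-$6$ hypothesis then forces each $\left | T_i \right |\leq 5$, and the contradiction comes at the top of the chain. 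Finding the right $T_0$ is the content. The paper's key structural move, which you do not anticipate, is to sub-split Case~II according to whether some pair $x,y$ satisfies $\left \langle x,y \right \rangle=\left \{ x,y,xy,yx \right \}$ with $xyx=x$, $yxy=y$ (the ``group-like'' case of Lemma~\ref{lem2.5a}). In sub-case~(a) this is excluded, which forces all $2$-generated subsemigroups into the ``left/right zero'' patterns and lets one write down the chain starting from $N_1=\left \{ ab,bc,ca,abc,bca,cab \right \}$ or $N_2=\left \{ abc,acb,bac,bca,cab,cba \right \}$. Sub-case~(b) is much more delicate: one fixes $a,b$ with $aba=a$, $bab=b$, then analyses $\left \langle a,b,t \right \rangle$ for the auxiliary elements $t\in\left \{ bcb,\,cb,\,bca,\,bc \right \}$ separately (Tables~\ref{tab:11}--\ref{tab:14}) to pin down $bc,cb,ac,ca$ inside $\left \langle a,b \right \rangle$. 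Your rectangular-band remark is in the right spirit but is not the lever the paper uses.

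Your second case is closer to the paper's Case~I, but again the paper does not ``iterate Lemma~\ref{lem2.5}'' abstractly: it first reduces $\left \langle a,b,d \right \rangle$ to one of two five-element shapes, then lists the four possible seven-element forms of $\left \langle a,b,c,d \right \rangle$ (Tables~\ref{tab:3}--\ref{tab:5}) and, in each, removes one generator at a time to locate a forced order-$6$ subsemigroup. So the outline is right, but the proof is precisely the bookkeeping you defer, and the organising ideas that make it finite --- the (a)/(b) split and the chain-of-subsemigroups device --- are missing from your proposal.
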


\begin{proof}
Suppose that $S$ does not have a subsemigroup of order 6. \\
\\
Case I. $\forall x,y,z \in S\:\: \left | \left \langle x,y,z \right \rangle \right |\leq 5$ \\
By Lemma \ref{lem2.4}, $\forall x,y \in S \:\: \left | \left \langle x,y \right \rangle \right |\leq 3$ and there exist $a,b,c \in S$ such that $\left | \left \langle a,b \right \rangle \right |=3$ and $\left | \left \langle a,b,c \right \rangle \right |=4$. By Lemma \ref{lem2.4a}, there exists $d \in S$ such that $\left | \left \langle a,b,c,d \right \rangle \right |\geq 6$. Then $\left | \left \langle a,b,d \right \rangle \right |\leq 5$ implies $ad,da \in \left \langle a,b \right \rangle \cup \left \{ d,bd,db \right \}$ or $bd,db \in \left \langle a,b \right \rangle \cup \left \{ d,ad,da \right \}$. Without loss of generality, let $\left \langle a,b,d \right \rangle $ be either $\left \{ a,b,ab,d \right \}$, $\left \{ a,b,ab,d,ad \right \}$ or $\left \{ a,b,ab,d,da \right \}$. If $\left \langle a,b,d \right \rangle =\left \{ a,b,ab,d \right \}$, then $\left \langle a,b,c,d \right \rangle=\left \{ a,b,ab \right \}\cup \left \langle c,d \right \rangle$ so $6\leq \left | \left \langle a,b,c,d \right \rangle \right |\leq \left | \left \{ a,b,ab \right \} \right |+\left | \left \langle c,d \right \rangle \right |\leq 6$, a contradiction. Now $\left | \left \langle a,b,c \right \rangle \right |=4$ and Lemma \ref{lem2.5} imply $\left \langle a,b,c,d \right \rangle \in \left \{ a,b,c,d,ab,ad,da,cd,dc \right \}$, so we have the following 4 cases. \\ 
\\
(i) $\left \langle a,b,c,d \right \rangle = \left \{ a,b,ab,c,d,ad,cd \right \}$ : From the table \ref{tab:3}, we get the following results. 

$\cdot$ $\left \langle a,b,c,d \right \rangle-\left \{ a \right \}$ is not a subsemigroup of $S$ : $adc=a$

$\cdot$ $\left \langle a,b,c,d \right \rangle-\left \{ b \right \}$ is not a subsemigroup of $S$ : $dab=b$ or $cdab=b$

$\cdot$ $\left \langle a,b,c,d \right \rangle-\left \{ c \right \}$ is not a subsemigroup of $S$ : $dab=c$ or $cda=c$ or $cdb=c$ or $cdab=c$

$\cdot$ $\left \langle a,b,c,d \right \rangle-\left \{ d \right \}$ is not a subsemigroup of $S$ : $bad=d$ or $bcd=d$ \\
$adc=a$ implies $ac=a$, and both $bad=d$ and $bcd=d$ imply $bd=d$. If $cdab=b$, then $(cda)d=(cda)(bd)=(cdab)d=bd=d$ so $cda \in \left \{ b,d \right \}$, a contradiction. Thus $dab=b$. If $cdb=c$ or $cdab=c$, then $cb=c$ so $ab=acb=ac=a$, a contradiction. Thus $cda=c$. $da \in \left \{ a,d,ad \right \}$ and $cd,cad \neq c$, so $da=a$. This implies $ab=dab=b$, a contradiction. \\
\\
(ii) $\left \langle a,b,c,d \right \rangle = \left \{ a,b,ab,c,d,ad,dc \right \}$ : From the table \ref{tab:4}, we get the following results. 

$\cdot$ $\left \langle a,b,c,d \right \rangle-\left \{ a \right \}$ is not a subsemigroup of $S$ : $adc=a$ or $abdc=a$

$\cdot$ $\left \langle a,b,c,d \right \rangle-\left \{ c \right \}$ is not a subsemigroup of $S$ : $bdc=c$ or $ax=c$ for some $x \in \left \langle a,b,c,d \right \rangle$ \\
Both $adc=a$ and $abcd=a$ imply $ac=a$. If there exists $x \in \left \langle a,b,c,d \right \rangle$ such that $ax=c$, then $a=ac=a(ax)=ax=c$, a contradiction. Thus $bdc=c$, and $cdc=(bdc)dc=b(dcdc)=bdc=c$ implies $cd=c$. Now $ad=(ac)d=a(cd)=ac=a$, a contradiction. \\
\\
(iii) $\left \langle a,b,c,d \right \rangle = \left \{ a,b,ab,c,d,da,cd \right \}$ : From the table \ref{tab:5}, we get the following results.

$\cdot$ $\left \langle a,b,c,d \right \rangle-\left \{ b \right \}$ is not a subsemigroup of $S$ : $dab=b$ or $cab=b$ or $cdab=b$

$\cdot$ $\left \langle a,b,c,d \right \rangle-\left \{ c \right \}$ is not a subsemigroup of $S$ : $cda=c$ or $cdb=c$ or $cdab=c$ \\
Suppose that $cab=b$ or $cdab=b$. Then $cb=b$. If $xb=c$ for some $x \in \left \langle a,b,c,d \right \rangle$, then $b=cb=(xb)b=xb=c$, a contradiction. Thus $cda=c$ and this implies $ca=c$. If $ac=a$, $a=ac=a(cda)=ada$, so $ad=a$. Then $cd=cad=ca=c$, a contradiction. If $ac=c$, then $ab=acb=cb=b$, a contradiction. If $ac=ab$, then $c=cac=cab=cb=b$, a contradiction. So we get $dab=b$ and this implies $db=b$. \\
$(ada)b=a(dab)=ab \neq b$ implies $ada \neq da$, and this implies $ad=a$. $(ba)d=ba$ implies $ba \neq a$ and $d(ba)=ba$ implies $ba \neq ab$, so $ba=b$ and $bd=bad=ba=b$. If $cda=c$, then $cd=(cda)d=cd(ad)=cda=c$, a contradiction. If $yb=c$ for some $y \in \left \langle a,b,c,d \right \rangle$, then $cd=ybd=yb=c$, a contradiction. \\
\begin{table}
\small
\centering
\setlength\tabcolsep{2.5pt}
\begin{tabular}{c"c|c|c|c|c|c|c}
    $*$ & $a$ & $b$ & $ab$ & $c$ & $d$ & $ad$ & $cd$ \\\thickhline
    $a$ & $a$ & $ab$ & $ab$ & $a,c,ab$ & $ad$ & $ad$ & $ab,ad,cd$ \\\hline
    $b$ & $a,b,ab$ & $b$ & $b,ab$ & $b,c,ab$ & $not\;a,c$ & $not\;a,c$ & $not\;a,c$ \\\hline
    $ab$ & $a,ab$ & $ab$ & $ab$ & $a,c,ab$ & $ab,ad,cd$ & $ab,ad,cd$ & $ab,ad,cd$ \\\hline
    $c$ & $a,c,ab$ & $b,c,ab$ & $b,c,ab$ & $c$ & $cd$ & $ab,ad,cd$ & $cd$ \\\hline
    $d$ & $a,d,ad$ & $not \; a,c$ & $not\;a$ & $c,d,cd$ & $d$ & $d,ad$ & $d,cd$ \\\hline
    $ad$ & $a,ad$ & $ab,ad,cd$ & $ab,ad,cd$ & $not\;b,d$ & $ad$ & $ad$ & $ab,ad,cd$ \\\hline
    $cd$ & $not\;b,d$ & $not\;a,d$ & $not\;a,d$ & $c,cd$ & $cd$ & $ab,ad,cd$ & $cd$ 
\end{tabular}
\caption{$\left \langle a,b,c,d \right \rangle = \left \{ a,b,ab,c,d,ad,cd \right \}$}
\label{tab:3} 
\end{table} 
\begin{table}
\small
\centering
\setlength\tabcolsep{2.5pt}
\begin{tabular}{c"c|c|c|c|c|c|c}
    $*$ & $a$ & $b$ & $ab$ & $c$ & $d$ & $ad$ & $dc$ \\\thickhline
    $a$ & $a$ & $ab$ & $ab$ & $a,c,ab$ & $ad$ & $ad$ & $a,c,ad,dc$ \\\hline
    $b$ & $a,b,ab$ & $b$ & $b,ab$ & $b,c,ab$ & $not\;a,c$ & $b,d,ab,ad$ & $not\;a,d$ \\\hline
    $ab$ & $a,ab$ & $ab$ & $ab$ & $a,c,ab$ & $ab,ad,dc$ & $ab,ad,dc$ & $not\;b,d$ \\\hline
    $c$ & $a,c,ab$ & $b,c,ab$ & $b,c,ab$ & $c$ & $c,d,dc$ & $c,ad,dc$ & $c,dc$ \\\hline
    $d$ & $a,d,ad$ & $not\;a,c$ & $not\;a,c$ & $dc$ & $d$ & $d,ad$ & $dc$ \\\hline
    $ad$ & $a,ad$ & $c,ab,ad,dc$ & $c,ab,ad,dc$ & $a,c,ad,dc$ & $ad$ & $ad$ & $a,c,ad,dc$ \\\hline
    $dc$ & $a,ab,ad,dc$ & $not\;a,c$ & $not\;a,c$ & $dc$ & $d,dc$ & $d,ab,ad,dc$ & $dc$
\end{tabular}
\caption{$\left \langle a,b,c,d \right \rangle = \left \{ a,b,ab,c,d,ad,dc \right \}$}
\label{tab:4} 
\end{table} 
\begin{table}
\small
\centering
\setlength\tabcolsep{2.5pt}
\begin{tabular}{c"c|c|c|c|c|c|c}
    $*$ & $a$ & $b$ & $ab$ & $c$ & $d$ & $da$ & $cd$ \\\thickhline
    $a$ & $a$ & $ab$ & $ab$ & $a,c,ab$ & $a,d,da$ & $a,da$ & $a,ab,da,cd$ \\\hline
    $b$ & $a,b,ab$ & $b$ & $b,ab$ & $b,c,ab$ & $not\;a,c$ & $not\;c,d$ & $not\;a,c$ \\\hline
    $ab$ & $a,ab$ & $ab$ & $ab$ & $a,c,ab$ & $d,ab,da,cd$ & $a,ab,da,cd$ & $not\;b,c$ \\\hline
    $c$ & $a,c,ab$ & $b,c,ab$ & $b,c,ab$ & $c$ & $cd$ & $not\;b,d$ & $cd$ \\\hline
    $d$ & $da$ & $not\;a,c$ & $not\;a,c$ & $c,d,cd$ & $d$ & $da$ & $d,cd$ \\\hline
    $da$ & $da$ & $not\;a,c$ & $not\;a,c$ & $c,ab,da,cd$ & $d,da$ & $da$ & $d,ab,da,cd$ \\\hline
    $cd$ & $not\;b,d$ & $not\;a,d$ & $not\;a,d$ & $c,cd$ & $cd$ & $not\;b,d$ & $cd$
\end{tabular}
\caption{$\left \langle a,b,c,d \right \rangle = \left \{ a,b,ab,c,d,da,cd \right \}$}
\label{tab:5} 
\end{table}
\\
(iv) $\left \langle a,b,c,d \right \rangle = \left \{ a,b,ab,c,d,da,dc \right \}$ : symmetric to (ii). \\
\\
Case II. $\exists a,b,c \in S\:\: \left | \left \langle a,b,c \right \rangle \right |>6$ 

(a) $\nexists x,y \in S$ such that $\left \langle x,y \right \rangle=\left \{ x,y,xy,yx \right \}$ and $xyx=x$, $yxy=y$ \\
Suppose that there exist $x,y \in S$ such that $\left | \left \langle x,y \right \rangle \right |=5$. Without loss of generality, let $\left \langle x,y \right \rangle$ be $\left \{ x,y,xy,yx,xyx \right \}$. It is easy to see that $yxy=y$ and $\left \langle xy,yx \right \rangle=\left \{ xy,yx,(xy)(yx),(yx)(xy) \right \}=\left \{ y,xy,yx,xyx \right \}$. In this case, $(xy)(yx)(xy)=xy$ and $(yx)(xy)(yx)=yx$, a contradiction. Thus for every $x,y \in S$, $\left | \left \langle x,y \right \rangle \right |\leq4$. \\
Suppose that for every $x,y \in S$, $\left | \left \langle x,y \right \rangle \right | \leq 3$. If one of $\left | \left \langle a,b \right \rangle \right |,\, \left | \left \langle b,c \right \rangle \right |,\, \left | \left \langle c,a \right \rangle \right |$ is 2, then it is easy to see that $\left | \left \langle a,b,c \right \rangle \right | \leq6$, a contradiction. Thus $\left | \left \langle a,b \right \rangle \right |= \left | \left \langle b,c \right \rangle \right |=\left | \left \langle c,a \right \rangle \right |=3$. Without loss of generality, let $\left \langle a,b \right \rangle$ be $\left \{ a,b,ab \right \}$. Now we have the following 4 cases. \\
\\
(i) $\left \langle b,c \right \rangle=\left \{ b,c,bc \right \},\, \left \langle c,a \right \rangle=\left \{ c,a,ca \right \}$ : In this case, $\left \langle a,b,c \right \rangle=\left \{ a,b,c,ab,bc,ca,abc,bca,cab \right \}$. Let $N_1$ be $\left \{ ab,bc,ca,abc,bca,cab \right \}$. Then from the table \ref{tab:6}, it is easy to see that $N_1$, $N_1\cup \left \{ c \right \}$ and $N_1\cup \left \{ b,c \right \}$ are subsemigroups of $\left \langle a,b,c \right \rangle$. Thus there exists a subsemigroup of $S$ of order 6, a contradiction. \\ 
The only nontrivial part of the table \ref{tab:6} is $abca \neq a$ (and $bcab \neq b$, $cabc \neq c$). Suppose that $abca=a$. If $ba=b$, then $bca=(bab)ca=ba=b$ and $ab=a(bca)=a$, a contradiction. If $ba=ab$, then $ab=ba=b(abca)=abca=a$, a contradiction. Thus $ba=a$ and for the same reason, $ac=a$. If $ab=ca$, then $\left | \left \langle a,b,c \right \rangle \right |=\left | \left \langle a,b,c,ab,bc,abc \right \rangle \right |\leq 6$, a contradiction. Thus $a,ab,ca$ are different elements of $\left \langle a,b,c \right \rangle$. Now $\left | \left \langle ab,ca \right \rangle \right |\leq 3$ implies $cab \in \left \{ a,ab,ca \right \}$. If $cab \in \left \{ a,ab \right \}$, then $ca=c(aba)=(cab)a=a$, a contradiction. If $cab=ca$, then $ab=(aca)b=aca=a$, a contradiction. Thus $abca \neq a$, $bcab \neq b$ and $cabc \neq c$. \\
(ii) $\left \langle b,c \right \rangle=\left \{ b,c,bc \right \},\, \left \langle c,a \right \rangle=\left \{ c,a,ac \right \}$ : In this case, $\left \langle a,b,c \right \rangle=\left \{ a,b,c,ab,bc,ac,abc \right \}$. If $xb=a$ or $xc=a$ for some $x \in S$, then $ab=a$ or $ac=a$, a contradiction. Thus $\left \{ b,c,ab,bc,ac,abc \right \}$ is a subsemigroup of $\left \langle a,b,c \right \rangle$, and this implies that there exists a subsemigroup of $S$ of order 6, a contradiction. \\
(iii) $\left \langle b,c \right \rangle=\left \{ b,c,cb \right \},\, \left \langle c,a \right \rangle=\left \{ c,a,ca \right \}$ : symmetric to (ii). \\
(iv) $\left \langle b,c \right \rangle=\left \{ b,c,cb \right \},\, \left \langle c,a \right \rangle=\left \{ c,a,ac \right \}$ : symmetric to (ii). \\
\begin{table}
      \small
      \centering
      \setlength\tabcolsep{2.5pt}
\begin{tabular}{c"c|c|c|c|c|c|c|c|c}
    $*$ & $a$ & $b$ & $c$ & $ab$ & $bc$ & $ca$ & $abc$ & $bca$ & $cab$ \\\thickhline
    $a$ & $a$ & $ab$ & $a,c,ca$ & $ab$ & $abc$ & $a,ca$ & $abc$ & $abc,bca$ & $ab,cab$\\\hline
    $b$ & $a,b,ab$ & $b$ & $bc$ & $b,ab$ & $bc$ & $bca$ & $bc,abc$ & $bca$ & $bca,cab$\\\hline
    $c$ & $ca$ & $b,c,bc$ & $c$ & $cab$ & $c,bc$ & $ca$ & $cab,abc$ & $ca,bca$ & $cab$\\\hline
    $ab$ & $a,ab$ & $ab$ & $abc$ & $ab$ & $abc$ & $abc,bca$ & $abc$ & $abc,bca$ & $ab,abc,cab$\\\hline
    $bc$ & $bca$ & $b,bc$ & $bc$ & $bca,cab$ & $bc$ & $bca$ & $bc,bca,abc$ & $bca$ & $bca,cab$\\\hline
    $ca$ & $ca$ & $cab$ & $c,ca$ & $cab$ & $cab,abc$ & $ca$ & $cab,abc$ & $ca,cab,bca$ & $cab$\\\hline
    $abc$ & $abc,bca$ & $ab,abc$ & $abc$ & $ab,abc,cab$ & $abc$ & $abc,bca$ & $abc$ & $abc,bca$ & $ab,abc,cab$\\\hline
    $bca$ & $bca$ & $bca,cab$ & $bc,bca$ & $bca,cab$ & $bc,bca,abc$ & $bca$ & $bc,bca,abc$ & $bca$ & $bca,cab$\\\hline
    $cab$ & $ca,cab$ & $cab$ & $cab,abc$ & $cab$ & $cab,abc$ & $ca,cab,bca$ & $cab,abc$ & $ca,cab,bca$ & $cab$
\end{tabular}
    \caption{$\left \langle a,b,c \right \rangle=\left \{ a,b,c,ab,bc,ca,abc,bca,cab \right \}$}
    \label{tab:6} 
\end{table} \\
So there exist $a,b \in S$ such that $\left | \left \langle a,b \right \rangle \right |=4$. By Lemma \ref{lem2.5a} and the assumption given in (a), $\left \langle a,b \right \rangle=\left \{ a,b,ab,ba \right \}$ and $\left \{ aba,bab \right \}=\left \{ ab,ba \right \}$. Without loss of generality, suppose that $aba=ab$ and $bab=ba$. $\left | \left \langle b,c \right \rangle \right |\leq 4$ implies $bcb, cbc \in \left \{ b,c,bc,cb \right \}$. $bcb=c$ implies $bcb=bc$ and $cbc=b$ implies $cbc=cb$, so $bcb \in \left \{ b,bc,cb \right \}$ and $cbc \in \left \{ c,bc,cb \right \}$. Suppose that $bcb \notin \left \{ bc,cb \right \}$. $cbc \in \left \{ bc,cb \right \}$ implies $bcb \in \left \{ bc,cb \right \}$, so $cbc \notin \left \{ bc,cb \right \}$ and $bcb=b$, $cbc=c$. If $bc=cb$, then $bc=bcb \notin \left \{ bc,cb \right \}$, a contradiction. Now $b,c \notin \left \{ bc,cb \right \}$ and $bc \neq cb$ imply $\left | \left \langle b,c \right \rangle \right |=4$, which contradicts to the assumption given in (a). Thus $bcb=bc$ or $bcb=cb$, and by the same argument $aca=ac$ or $aca=ca$. Now we have the following 4 cases. \\
\\
(i) $bcb=bc$, $aca=ac$ : Let $N_2$ be $\left \{ abc,acb,bac,bca,cab,cba \right \}$. From the table \ref{tab:9}, it is easy to see that $N_2,\, N_2\cup \left \{ ac \right \},\, N_2\cup \left \{ ac,ca \right \},\, \cdots , \left \langle a,b,c \right \rangle-\left \{ a \right \}$ are subsemigroups of $\left \langle a,b,c \right \rangle$. Thus there exists a subsemigroup of $S$ of order 6, a contradiction. \\
(ii) $bcb=bc$, $aca=ca$ : For every $x,y,z \in S$ such that $\left \{ x,y,z \right \}=\left \{ a,b,c \right \}$, $\left | \left \langle x,yz \right \rangle \right |\leq 4$ implies $xyzx \in \left \{ x,yz,xyz,yzx \right \}$. $xyzx=yz$ implies $xyzx=xyz$, so $xyzx \in \left \{ x,xyz,yzx \right \}$. Suppose that $xyzx=x$. Then $xyx=xy(xyzx)=xyzx=x$ implies $xy=x$ or $yx=x$, and $xzx=(xyzx)zx=xyzx=x$ implies $xz=x$ or $zx=x$. Each of $xy=xz=x$, $xy=zx=x$, $yx=xz=x$ and $yx=zx=x$ implies $xyz=x$, $zxy=x$, $yxz=x$ and $zyx=x$, so $xyzx=x \in N_2$. Thus $xyzx \in N_2$ and $xyzxy \in \left \{ xyzy,xzy,yxzy,yzxy,zxy,zyxy \right \}\subset N_2$ for all $x,y,z \in S$ such that $\left \{ x,y,z \right \}=\left \{ a,b,c \right \}$. From the table \ref{tab:10}, it is easy to see that $N_2,\, N_2\cup \left \{ ac \right \},\, N_2\cup \left \{ ac,ca \right \},\, \cdots , \left \langle a,b,c \right \rangle-\left \{ a \right \}$ are subsemigroups of $\left \langle a,b,c \right \rangle$. Thus there exists a subsemigroup of $S$ of order 6, a contradiction. \\
(iii) $bcb=cb$, $aca=ac$ : symmetric to (ii). \\
(iv) $bcb=cb$, $aca=ca$ : symmetric to (ii). \\
\begin{table}
\small
\centering
\setlength\tabcolsep{2.5pt}
\begin{tabular}{c"c|c|c|c|c|c|c|c|c|c|c|c|c|c|c}
    $*$ & $a$ & $b$ & $c$ & $ab$ & $ba$ & $bc$ & $cb$ & $ca$ & $ac$ & $abc$ & $acb$ & $bac$ & $bca$ & $cab$ & $cba$\\\thickhline
    $a$ & $a$ & $ab$ & $ac$ & $ab$ & $ab$ & $abc$ & $acb$ & $ac$ & $ac$ & $abc$ & $acb$ & $abc$ & $abc$ & $acb$ & $acb$\\\hline
    $b$ & $ba$ & $b$ & $bc$ & $ba$ & $ba$ & $bc$ & $bc$ & $bca$ & $bac$ & $bac$ & $bac$ & $bac$ & $bca$ & $bca$ & $bca$\\\hline
    $c$ & $ca$ & $cb$ & $c$ & $cab$ & $cba$ & $cb$ & $cb$ & $ca$ & $ca$ & $cab$ & $cab$ & $cba$ & $cba$ & $cab$ & $cba$\\\hline
    $ab$ & $ab$ & $ab$ & $abc$ & $ab$ & $ab$ & $abc$ & $abc$ & $abc$ & $abc$ & $abc$ & $abc$ & $abc$ & $abc$ & $abc$ & $abc$ \\\hline
    $ba$ & $ba$ & $ba$ & $bac$ & $ba$ & $ba$ & $bac$ & $bac$ & $bac$ & $bac$ & $bac$ & $bac$ & $bac$ & $bac$ & $bac$ & $bac$\\\hline
    $bc$ & $bca$ & $bc$ & $bc$ & $bca$ & $bca$ & $bc$ & $bc$ & $bca$ & $bca$ & $bca$ & $bca$ & $bca$ & $bca$ & $bca$ & $bca$\\\hline
    $cb$ & $cba$ & $cb$ & $cb$ & $cba$ & $cba$ & $cb$ & $cb$ & $cba$ & $cba$ & $cba$ & $cba$ & $cba$ & $cba$ & $cba$ & $cba$\\\hline
    $ca$ & $ca$ & $cab$ & $ca$ & $cab$ & $cab$ & $cab$ & $cab$ & $ca$ & $ca$ & $cab$ & $cab$ & $cab$ & $cab$ & $cab$ & $cab$\\\hline
    $ac$ & $ac$ & $acb$ & $ac$ & $acb$ & $acb$ & $acb$ & $acb$ & $ac$ & $ac$ & $acb$ & $acb$ & $acb$ & $acb$ & $acb$ & $acb$\\\hline
    $abc$ & $abc$ & $abc$ & $abc$ & $abc$ & $abc$ & $abc$ & $abc$ & $abc$ & $abc$ & $abc$ & $abc$ & $abc$ & $abc$ & $abc$ & $abc$\\\hline
    $acb$ & $acb$ & $acb$ & $acb$ & $acb$ & $acb$ & $acb$ & $acb$ & $acb$ & $acb$ & $acb$ & $acb$ & $acb$ & $acb$ & $acb$ & $acb$\\\hline
    $bac$ & $bac$ & $bac$ & $bac$ & $bac$ & $bac$ & $bac$ & $bac$ & $bac$ & $bac$ & $bac$ & $bac$ & $bac$ & $bac$ & $bac$ & $bac$\\\hline
    $bca$ & $bca$ & $bca$ & $bca$ & $bca$ & $bca$ & $bca$ & $bca$ & $bca$ & $bca$ & $bca$ & $bca$ & $bca$ & $bca$ & $bca$ & $bca$\\\hline
    $cab$ & $cab$ & $cab$ & $cab$ & $cab$ & $cab$ & $cab$ & $cab$ & $cab$ & $cab$ & $cab$ & $cab$ & $cab$ & $cab$ & $cab$ & $cab$\\\hline
    $cba$ & $cba$ & $cba$ & $cba$ & $cba$ & $cba$ & $cba$ & $cba$ & $cba$ & $cba$ & $cba$ & $cba$ & $cba$ & $cba$ & $cba$ & $cba$\\\hline
\end{tabular}
    \caption{$\left \langle a,b,c \right \rangle$ ($bcb=bc$, $aca=ac$)}
    \label{tab:9} 
\end{table}

\begin{table}
      \small
      \centering
      \setlength\tabcolsep{2.5pt}
\begin{tabular}{c"c|c|c|c|c|c|c|c|c|c|c|c|c|c|c}
    $*$ & $a$ & $b$ & $c$ & $ab$ & $ba$ & $bc$ & $cb$ & $ca$ & $ac$ & $abc$ & $acb$ & $bac$ & $bca$ & $cab$ & $cba$\\\thickhline
    $a$ & $a$ & $ab$ & $ac$ & $ab$ & $ab$ & $abc$ & $acb$ & $ca$ & $ac$ & $abc$ & $acb$ & $abc$ & $abca$ & $cab$ & $acba$\\\hline
    $b$ & $ba$ & $b$ & $bc$ & $ba$ & $ba$ & $bc$ & $bc$ & $bca$ & $bac$ & $bac$ & $bacb$ & $bac$ & $bca$ & $bcab$ & $bca$\\\hline
    $c$ & $ca$ & $cb$ & $c$ & $cab$ & $cba$ & $cb$ & $cb$ & $ca$ & $ac$ & $cabc$ & $acb$ & $cbac$ & $cbca$ & $cab$ & $cba$\\\hline
    $ab$ & $ab$ & $ab$ & $abc$ & $ab$ & $ab$ & $abc$ & $abc$ & $abca$ & $abc$ & $abc$ & $abc$ & $abc$ & $abca$ & $abcab$ & $abca$\\\hline
    $ba$ & $ba$ & $ba$ & $bac$ & $ba$ & $ba$ & $bac$ & $bacb$ & $bca$ & $bac$ & $bac$ & $bacb$ & $bac$ & $bacb$ & $bcab$ & $bacba$\\\hline
    $bc$ & $bca$ & $bc$ & $bc$ & $bcab$ & $bca$ & $bc$ & $bc$ & $bca$ & $bac$ & $bcabc$ & $bacb$ & $bac$ & $bca$ & $bcab$ & $bca$\\\hline
    $cb$ & $cba$ & $cb$ & $cb$ & $cba$ & $cba$ & $cb$ & $cb$ & $cba$ & $cbac$ & $cbac$ & $cbacb$ & $cbac$ & $cba$ & $cba$ & $cba$\\\hline
    $ca$ & $ca$ & $cab$ & $ac$ & $cab$ & $cab$ & $cabc$ & $acb$ & $ca$ & $ac$ & $cabc$ & $acb$ & $cabc$ & $cabca$ & $cab$ & $acba$\\\hline
    $ac$ & $ca$ & $acb$ & $ac$ & $cab$ & $acba$ & $acb$ & $acb$ & $ca$ & $ac$ & $cabc$ & $acb$ & $acbac$ & $acba$ & $cab$ & $acba$\\\hline
    $abc$ & $abca$ & $abc$ & $abc$ & $abcab$ & $abca$ & $abc$ & $abc$ & $abca$ & $abc$ & $abc$ & $abc$ & $abc$ & $abca$ & $abcab$ & $abca$\\\hline
    $acb$ & $acba$ & $acb$ & $acb$ & $acba$ & $acba$ & $acb$ & $acb$ & $acba$ & $acbac$ & $acbac$ & $acb$ & $acbac$ & $acba$ & $acba$ & $acba$\\\hline
    $bac$ & $bca$ & $bacb$ & $bac$ & $bcab$ & $bacba$ & $bacb$ & $bacb$ & $bca$ & $bac$ & $bcabc$ & $bacb$ & $bac$ & $bacba$ & $bcab$ & $bacba$\\\hline
    $bca$ & $bca$ & $bcab$ & $bac$ & $bcab$ & $bcab$ & $bcabc$ & $bacb$ & $bca$ & $bac$ & $bcabc$ & $bacb$ & $bcabc$ & $bca$ & $bcab$ & $bacba$\\\hline
    $cab$ & $cab$ & $cab$ & $cabc$ & $cab$ & $cab$ & $cabc$ & $cabc$ & $cabca$ & $cabc$ & $cabc$ & $cabc$ & $cabc$ & $cabca$ & $cab$ & $cabca$\\\hline
    $cba$ & $cba$ & $cba$ & $cbac$ & $cba$ & $cba$ & $cbac$ & $cbacb$ & $cba$ & $cbac$ & $cbac$ & $cbacb$ & $cbac$ & $cba$ & $cba$ & $cba$\\\hline
\end{tabular}
    \caption{$\left \langle a,b,c \right \rangle$ ($bcb=bc$, $aca=ca$)}
    \label{tab:10} 
\end{table}

(b) $\exists x,y \in S$ such that $\left \langle x,y \right \rangle=\left \{ x,y,xy,yx \right \}$ and $xyx=x$, $yxy=y$ \\
Let $a$ and $b$ be two elements of $S$ such that $\left \langle a,b \right \rangle=\left \{ a,b,ab,ba \right \}$ and $aba=a, bab=b$. By Lemma \ref{lem2.4a}, there exists $c \in S$ such that $\left | \left \langle a,b,c \right \rangle \right |\geq 6$. Let $x$, $y$, $z$ and $w$ be $bcb$, $cb$, $bca$ and $bc$. Then $\left \langle a,b,x \right \rangle=\left \{ a,b,ab,ba,x,ax,xa,axa \right \}$. Let $M_1$ be $\left \{ x,ax,xa,axa \right \}$. From the table \ref{tab:11}, $M_1\cup \left \{ a,ab \right \}$ and $M_1\cup \left \{ b,ba \right \}$ are subsemigroups of $\left \langle a,b,x \right \rangle$. If $\left | M_1 \right |=4$, then $\left | M_1\cup \left \{ a,ab \right \} \right | \neq 6$ implies $\left | M_1\cup \left \{ a,ab \right \} \right | \leq 5$ and $\left | M_1\cup \left \{ b,ba \right \} \right | \neq 6$ implies $\left | M_1\cup \left \{ b,ba \right \} \right | \leq 5$, so $\left | \left \langle a,b,x \right \rangle \right | \leq6$. It is easy to see that $M_1$ does not have a subsemigroup of order 3, so $\left | M_1 \right |<4$ implies $\left | M_1 \right | \leq2$ and $\left | \left \langle a,b,x \right \rangle \right | \leq6$. Now we have the following 2 cases. \\
\\
\begin{table}
    \begin{minipage}{.5\linewidth}
      \small
      \centering
      \setlength\tabcolsep{2.5pt}
\begin{tabular}{c"c|c|c|c|c|c|c|c}
    $*$ & $a$ & $b$ & $ab$ & $ba$ & $x$ & $ax$ & $xa$ & $axa$ \\\thickhline
    $a$ & $a$ & $ab$ & $ab$ & $a$ & $ax$ & $ax$ & $axa$ & $axa$\\\hline
    $b$ & $ba$ & $b$ & $b$ & $ba$ & $x$ & $x$ & $xa$ & $xa$\\\hline
    $ab$ & $a$ & $ab$ & $ab$ & $a$ & $ax$ & $ax$ & $axa$ & $axa$\\\hline
    $ba$ & $ba$ & $b$ & $b$ & $ba$ & $x$ & $x$ & $xa$ & $xa$\\\hline
    $x$ & $xa$ & $x$ & $x$ & $xa$ & $x$ & $x$ & $xa$ & $xa$\\\hline
    $ax$ & $axa$ & $ax$ & $ax$ & $axa$ & $ax$ & $ax$ & $axa$ & $axa$\\\hline
    $xa$ & $xa$ & $x$ & $x$ & $xa$ & $x$ & $x$ & $xa$ & $xa$\\\hline
    $axa$ & $axa$ & $ax$ & $ax$ & $axa$ & $ax$ & $ax$ & $axa$ & $axa$
\end{tabular}
      \caption{$\left \langle a,b,x \right \rangle$ ($x=bcb$)}
      \label{tab:11} 
    \end{minipage}%
    \begin{minipage}{.5\linewidth}
      \small
      \centering
      \setlength\tabcolsep{2.5pt}
\begin{tabular}{c"c|c|c|c|c|c|c|c}
    $*$ & $a$ & $b$ & $ab$ & $ba$ & $y$ & $by$ & $ay$ & $bay$ \\\thickhline
    $a$ & $a$ & $ab$ & $ab$ & $a$ & $ay$ & $by$ & $ay$ & $ay$\\\hline
    $b$ & $ba$ & $b$ & $b$ & $ba$ & $by$ & $by$ & $bay$ & $bay$\\\hline
    $ab$ & $a$ & $ab$ & $ab$ & $a$ & $by$ & $by$ & $ay$ & $ay$\\\hline
    $ba$ & $ba$ & $b$ & $b$ & $ba$ & $bay$ & $by$ & $bay$ & $bay$\\\hline
    $y$ & $y$ & $y$ & $y$ & $y$ & $y$ & $y$ & $y$ & $y$\\\hline
    $by$ & $by$ & $by$ & $by$ & $by$ & $by$ & $by$ & $by$ & $by$\\\hline
    $ay$ & $ay$ & $ay$ & $ay$ & $ay$ & $ay$ & $ay$ & $ay$ & $ay$\\\hline
    $bay$ & $bay$ & $bay$ & $bay$ & $bay$ & $bay$ & $bay$ & $bay$ & $bay$
\end{tabular}
    \caption{$\left \langle a,b,y \right \rangle$ ($y=cb$)}
    \label{tab:12} 
    \end{minipage} 
\end{table}
\begin{table}
    \begin{minipage}{.5\linewidth}
      \small
      \centering
      \setlength\tabcolsep{2.5pt}
\begin{tabular}{c"c|c|c|c|c|c|c|c}
    $*$ & $a$ & $b$ & $ab$ & $ba$ & $z$ & $az$ & $zb$ & $azb$ \\\thickhline
    $a$ & $a$ & $ab$ & $ab$ & $a$ & $az$ & $az$ & $azb$ & $azb$\\\hline
    $b$ & $ba$ & $b$ & $b$ & $ba$ & $z$ & $z$ & $zb$ & $zb$\\\hline
    $ab$ & $a$ & $ab$ & $ab$ & $a$ & $az$ & $az$ & $azb$ & $azb$\\\hline
    $ba$ & $ba$ & $b$ & $b$ & $ba$ & $z$ & $z$ & $zb$ & $zb$\\\hline
    $z$ & $z$ & $zb$ & $zb$ & $z$ & $z$ & $z$ & $zb$ & $zb$\\\hline
    $az$ & $az$ & $azb$ & $azb$ & $az$ & $az$ & $az$ & $azb$ & $azb$\\\hline
    $zb$ & $z$ & $zb$ & $zb$ & $z$ & $z$ & $z$ & $zb$ & $zb$\\\hline
    $azb$ & $az$ & $azb$ & $azb$ & $az$ & $az$ & $az$ & $azb$ & $azb$
\end{tabular}
      \caption{$\left \langle a,b,z \right \rangle$ ($z=bca$)}
      \label{tab:13} 
    \end{minipage}%
    \begin{minipage}{.5\linewidth}
      \small 
      \centering
      \setlength\tabcolsep{2.5pt}
\begin{tabular}{c"c|c|c|c|c|c|c|c}
    $*$ & $a$ & $b$ & $ab$ & $ba$ & $w$ & $aw$ & $wa$ & $waw$ \\\thickhline
    $a$ & $a$ & $ab$ & $ab$ & $a$ & $aw$ & $aw$ & $a,wa$ & $aw$\\\hline
    $b$ & $ba$ & $b$ & $b$ & $ba$ & $w$ & $w$ & $wa$ & $waw$\\\hline
    $ab$ & $a$ & $ab$ & $ab$ & $a$ & $aw$ & $aw$ & $a,wa$ & $aw$\\\hline
    $ba$ & $ba$ & $b$ & $b$ & $ba$ & $w$ & $w$ & $wa$ & $waw$\\\hline
    $w$ & $wa$ & $b$ & $b,wa$ & $wa$ & $w$ & $waw$ & $wa$ & $waw$\\\hline
    $aw$ & $a,wa$ & $ab$ & $ab,wa$ & $a$ & $aw$ & $aw$ & $a,wa$ & $aw$\\\hline
    $wa$ & $wa$ & $b,wa$ & $b,wa$ & $wa$ & $waw$ & $waw$ & $wa$ & $waw$\\\hline
    $waw$ & $wa$ & $b,wa$ & $b,wa$ & $wa$ & $waw$ & $waw$ & $wa$ & $waw$
\end{tabular}
    \caption{$\left \langle a,b,w \right \rangle$ ($w=bc$)}
    \label{tab:14} 
    \end{minipage} 
\end{table}
(i) $x \notin \left \langle a,b \right \rangle$ : $\left | \left \langle a,b,x \right \rangle \right | \leq5$ implies $ax,xa \in \left \langle a,b \right \rangle \cup \left \{ x \right \}$. Each of $b(ax)=x$ and $(xa)b=x$ implies $ax \notin \left \langle a,b \right \rangle$ and $xa \notin \left \langle a,b \right \rangle$, so $ax=xa=x$ and $abc=axc=xc=bc$, $cba=cxa=cx=cb$. Thus $\left | \left \langle a,b,y \right \rangle \right |=\left \{ a,b,ab,ba,y,by,ay,bay \right \}$. From the table \ref{tab:12}, $\left \langle a,b,y \right \rangle-\left \{ y,by \right \}$ and $\left \langle a,b,y \right \rangle-\left \{ y \right \}$ are subsemigroups of $\left \langle a,b,y \right \rangle$, so $\left | \left \langle a,b,y \right \rangle \right |\leq 5$. $ya=yb=y$ implies $y \notin \left \langle a,b \right \rangle$, so $\left \langle a,b,y \right \rangle=\left \langle a,b \right \rangle\cup \left \{ y \right \}$ and $ay,by \in \left \langle a,b \right \rangle\cup \left \{ y \right \}$. $by=x \notin \left \langle a,b \right \rangle$ and $(ay)a=(ay)b=ay$ implies $ay \notin \left \langle a,b \right \rangle$, so $ay=by=y$. For the same reason, $wa=wb=w$. \\
(ii) $x \in \left \langle a,b \right \rangle$ : $bx=xb=x$ implies $x=b$. Let $M_2$ be $\left \{ z,az,zb,azb \right \}$. Then $\left \langle a,z,b \right \rangle=\left \langle a,b \right \rangle\cup M_2$. From the table \ref{tab:13}, $M_2\cup \left \{ a,ab \right \}$ and $M_2\cup \left \{ b,ba \right \}$ are subsemigroups of $\left \langle a,b,z \right \rangle$. If $\left | M_2 \right |=4$, then $\left | M_2\cup \left \{ a,ab \right \} \right | \neq 6$ implies $\left | M_2\cup \left \{ a,ab \right \} \right | \leq 5$, and $\left | M_2\cup \left \{ b,ba \right \} \right | \neq 6$ implies $\left | M_2\cup \left \{ b,ba \right \} \right | \leq 5$, so $\left | \left \langle a,b,z \right \rangle \right | \leq6$. It is easy to see that $M_2$ does not have a subsemigroup of order 3, so $\left | M_2 \right |<4$ implies $\left | M_2 \right | \leq2$ and $\left | \left \langle a,b,z \right \rangle \right | \leq6$. $bz=za=z$ implies $z \notin \left \{ a,b,ab \right \}$, so $z=ba$ or $z \notin \left \langle a,b \right \rangle$. If $z \notin \left \langle a,b \right \rangle$, then $\left | \left \langle a,b,z \right \rangle \right | \leq5$ implies $az,zb \in \left \langle a,b \right \rangle\cup \left \{ z \right \}$. $a(az)=(az)a=az$ implies $az \notin \left \{ b,ab,ba \right \}$ and $b(zb)=(zb)b=zb$ implies $zb \notin \left \{ a,ab,ba \right \}$. Each of $az=a$ and $zb=b$ implies $z=ba$. Thus if $z \neq ba$, then $az=zb=z$. \\
$x=b$ and [$z=ba$ or $az=zb=z$] implies $\left \langle a,b,w \right \rangle=\left \{ a,b,ab,ba,w,aw,wa,waw \right \}$. Let $M_3$ be $\left \{ w,aw,wa,waw \right \}$. Then from the table \ref{tab:14}, $M_3\cup \left \{ a,ba \right \}$ is a subsemigroup of $\left \langle a,b,w \right \rangle$, so $\left | \left \langle M_3\cup \left \{ a,ba \right \} \right \rangle \right |\leq 5$. If $\left | M_3 \right |=4$, then $a \in M_3$ or $ba \in M_3$. $ba \neq a$ implies $a \notin \left \{ w,wa,waw \right \}$. If $a=aw$, then $wa=waw$, so $\left | \left \langle a,b,w \right \rangle \right | \leq6$. $b \neq ab,ba$ implies $b \notin \left \{ aw,wa \right \}$. If $b=w$ or $b=waw$, then $ab=aw$, so $\left | \left \langle a,b,w \right \rangle \right | \leq6$. Suppose that $\left | M_3 \right | \leq3$. Each of $aw=w$, $wa=w$ and $waw=aw$ implies $waw=w$. $aw=wa$ implies $waw=aw$, and $waw=wa$ implies $aw=awa \in \left \{ a,wa \right \}$. If $waw=w$, then $ba=wba=(waw)ba=wa$. Thus $\left | \left \langle a,b,w \right \rangle \right | \leq6$. \\
If $w \in \left \langle a,b \right \rangle$, then $wb=b$ implies $w=b$ or $w=ba$. If $w \notin \left \langle a,b \right \rangle$, then $\left | \left \langle a,b,w \right \rangle \right | \leq5$ implies $aw \in \left \langle a,b \right \rangle\cup \left \{ w \right \}$. $(aw)b=ab$ implies $aw \notin \left \{ b,ba,w \right \}$, so $aw \in \left \{ a,ab \right \}$ and $w=b(aw) \in \left \{ b,ba \right \}$. \\
\\
By (i) and (ii), $acb=bca=bc=cb$ or $bc \in \left \{ b,ba \right \}$ holds. The assumption $aba=a, bab=b$ is symmetric in $a$ and $b$, so $bca=acb=ac=ca$ or $ac \in \left \{ a,ab \right \}$ holds. Now we have the following 4 cases. \\
\\
(i) $acb=bca=bc=cb=ac=ca$ : $\left | \left \langle a,b,c \right \rangle \right |=\left | \left \{ a,b,ab,ba,c,ac \right \} \right |\leq 6$, a contradiction. \\
(ii) $acb=bca=bc=cb$ and $ac \in \left \{ a,ab \right \}$ : $ac \in \left \{ a,ab \right \}$ implies $acb=ab$, so $ab=bca$. Then $b=bab=ab$, a contradiction. \\
(iii) $bca=acb=ac=ca$ and $bc \in \left \{ b,ba \right \}$ : symmetric to (ii). \\
(iv) $bc \in \left \{ b,ba \right \}$ and $ac \in \left \{ a,ab \right \}$ : $ac \in \left \{ a,ab \right \}$ implies $cac \in \left \{ ca,cab \right \}$, so $\left \langle a,b,cac \right \rangle=\left \{ a,b,ab,ba,ca,cab \right \}$ and $\left | \left \langle a,b,cac \right \rangle \right |\leq 5$. If $cab=ca$, then $ab=(aca)b=a(cab)=aca=a$, a contradiction. Thus $ca \in \left \langle a,b \right \rangle$ or $cab \in \left \langle a,b \right \rangle$. If $cab \in \left \langle a,b \right \rangle$, then $(cab)a=ca$ implies $ca \in \left \langle a,b \right \rangle$. Thus $ca \in \left \langle a,b \right \rangle$ and for the same reason, $cb \in \left \langle a,b \right \rangle$. Now $bc,ac,cb,ca \in \left \langle a,b \right \rangle$ so $\left | \left \langle a,b,c \right \rangle \right |=\left | \left \{ a,b,ab,ba,c \right \} \right |\leq 5$, a contradiction.  
\end{proof}

\vspace{3mm}

\begin{lem} \label{lem2.8}
For every positive integer $n \neq 1,2,4,6,12$, there exist positive integers $p$ and $q$ such that 
\begin{equation} \label{eq:211}
max \left \{ (p-1)q,\, p(q-1) \right \}<n<pq.
\end{equation}
\end{lem}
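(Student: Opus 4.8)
The plan is to translate the inequality $(\ref{eq:211})$ into an elementary divisibility statement and then check that statement holds for every $n$ outside the listed set. The key reduction I would prove first is: \emph{if $p\geq 2$ is an integer with $p(p-1)\leq n$ and $p\nmid n$, then $(\ref{eq:211})$ holds for this $p$ together with $q=\lfloor n/p\rfloor+1$.} To see this, write $n=p\lfloor n/p\rfloor+r$ with $1\leq r\leq p-1$ (this is exactly where $p\nmid n$ is used), so that $pq=n+(p-r)$ and hence $n<pq\leq n+p-1$; consequently $p(q-1)=pq-p=n-r<n$, and since $p(p-1)\leq n$ forces $q=\lfloor n/p\rfloor+1\geq p$, we also get $(p-1)q=pq-q\leq pq-p<n$. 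Thus $(p,q)$ satisfies $(\ref{eq:211})$.

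With this in hand the task becomes: for each $n\notin\{1,2,4,6,12\}$, find an integer $p$ with $2\leq p$, $p(p-1)\leq n$, and $p\nmid n$. I would let $k$ be the largest positive integer with $k(k-1)\leq n$ (so $k\geq 2$ because $n\neq 1$) and look for $p$ among $2,3,\ldots,k$. Arguing by contradiction, suppose every such $p$ divides $n$; then $L:=\mathrm{lcm}(2,3,\ldots,k)$ divides $n$, so $L\leq n$, while maximality of $k$ gives $n<k(k+1)$, hence $L<k(k+1)$. For $k=2,3,4$ the relation $L\mid n$ (with $L=2,6,12$ respectively) together with the short range of admissible $n$ forces $n$ into $\{2,4\}$, $\{6\}$, $\{12\}$ respectively, contradicting the hypothesis on $n$. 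For $k\geq 5$ I would instead contradict $L<k(k+1)$ by proving $L\geq k(k+1)$ directly.

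The only step that is not a routine verification is this last inequality, $\mathrm{lcm}(2,\ldots,k)\geq k(k+1)$ for $k\geq 5$. For $k=5$ and $k=6$ it is immediate, since $L=60$ while $k(k+1)\leq 42$. For $k\geq 7$ I would invoke the classical bound $\mathrm{lcm}(1,2,\ldots,k)\geq 2^{k-1}$ and then check $2^{k-1}\geq k(k+1)$ for all $k\geq 7$ by a one-line induction (base case $k=7$: $64\geq 56$; and if $2^{k-1}\geq k(k+1)$ then $2^{k}\geq 2k(k+1)\geq(k+1)(k+2)$ because $k\geq 2$). I expect the main obstacle to be essentially bookkeeping: deciding how much of the $\mathrm{lcm}$ lower bound to prove versus cite, and making the small-$k$ analysis watertight, since the precise exceptional set $\{1,2,4,6,12\}$ in the statement is exactly what those cases must reproduce.
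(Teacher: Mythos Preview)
Your reduction to finding a non-divisor $p$ with $p(p-1)\le n$ and then taking $q=\lfloor n/p\rfloor+1$ is exactly the construction the paper uses, and your verification of \eqref{eq:211} from those hypotheses is correct. Where you diverge is in \emph{how} you produce such a $p$. The paper restricts to $n\ge 15$, takes $t=\lfloor\sqrt{n+1}\rfloor\ge 4$, and observes that if both $t$ and $t-1$ divided $n$ then $n\in\{t^2,t(t+1)\}$ would force $t-1\le 2$; this is entirely self-contained. The remaining $n<15$ are then dispatched by hand (odd $n$ via $p=2$, and $n=8,10,14$ by explicit pairs). Your route instead lets the exceptional set fall out of the cases $k=2,3,4$ and handles $k\ge 5$ uniformly via $\mathrm{lcm}(1,\dots,k)\ge k(k+1)$, appealing to Nair's bound $\mathrm{lcm}(1,\dots,k)\ge 2^{k-1}$ for $k\ge 7$. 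This is cleaner bookkeeping and makes the appearance of $\{1,2,4,6,12\}$ feel inevitable, at the cost of importing (or reproving) the $\mathrm{lcm}$ inequality; the paper's argument is more ad hoc for small $n$ but needs nothing external.
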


\begin{proof}
Suppose that there exists a positive integer $n \geq 15$ such that for every positive integer $k \leq \sqrt{n+1}$, $k$ divides $n$. If $n \geq 15$, then there uniquely exists a positive integer $t \geq 4$ such that $t^2\leq n+1< (t+1)^2$. By the assumption, $t \mid n$, so $n=t^2$ or $n=t(t+1)$. In each case $t-1 \mid n$ implies $t-1 \mid 1$ and $t-1 \mid 2$, which contradicts to $t \geq 4$. Thus for every positive integer $n \geq 15$, there exists a positive integer $k \leq \sqrt{n+1}$ such that $k$ does not divide $n$. For $m=\left \lfloor \frac{n}{k} \right \rfloor+1$, it is easy to see that $m(k-1)\leq (m-1)k<n<mk$, so $(p,q)=(k,m)$ satisfies \eqref{eq:211} for every $n \geq 15$. If $n \geq 3$ is odd, then $(p,q)=(2,\frac{n+1}{2})$ satisfies \eqref{eq:211}. If $n$ is 8, 10 or 14, $(p,q)=(3,3),(3,4)$ and $(3,5)$ satisfy \eqref{eq:211} for each $n$.
\end{proof}

\vspace{3mm}

\begin{thm} \label{thm2.9}
For every positive integer $n \neq 1,2,4,6$, there exists an idempotent subsemigroup of order $\geq n$ which does not have a subsemigroup of order $n$.
\end{thm}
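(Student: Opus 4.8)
The plan is to produce, for each $n\notin\{1,2,4,6\}$, an explicit finite idempotent semigroup of order $\ge n$ having no subsemigroup of order exactly $n$. Lemma~\ref{lem2.8} already isolates the arithmetic core, so the remaining work is to attach an actual semigroup to the integers $p,q$ it supplies, and then to dispose of the one leftover value $n=12$ by a slightly different construction.

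For $n\notin\{1,2,4,6,12\}$ I would use the rectangular band determined by $p,q$: choose $p,q$ with $\max\{(p-1)q,\,p(q-1)\}<n<pq$ (Lemma~\ref{lem2.8}), and set $S=\{1,\dots,p\}\times\{1,\dots,q\}$ with $(i,j)(k,\ell)=(i,\ell)$. Clearly $x^2=x$ on $S$ and $|S|=pq\ge n$. The one structural fact I need is that a nonempty $T\subseteq S$ is a subsemigroup precisely when $T=L'\times R'$ for nonempty $L'\subseteq\{1,\dots,p\}$, $R'\subseteq\{1,\dots,q\}$: if $(i,j),(k,\ell)\in T$ then $(i,\ell)=(i,j)(k,\ell)\in T$, so $T$ is a combinatorial rectangle, and every rectangle is plainly closed. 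Thus the orders of subsemigroups of $S$ are exactly the products $uv$ with $1\le u\le p$ and $1\le v\le q$. If $(u,v)\ne(p,q)$ then $uv\le(p-1)q$ or $uv\le p(q-1)$, so $uv<n$; and $(u,v)=(p,q)$ gives $uv=pq>n$. Hence $S$ has no subsemigroup of order $n$.

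For $n=12$ a rectangular band cannot work, since a rectangular band of order $\ge 12$ always has a sub-rectangle of order $12$; instead I would glue two rectangular bands. Let $R_1$ be the $3\times 3$ rectangular band and $R_2$ the $2\times 2$ rectangular band, and define a product on $S=R_1\sqcup R_2$ that restricts to the given one on each $R_i$ and has $ab=ba=b$ for all $a\in R_1$, $b\in R_2$. A routine verification, splitting into cases by how many of $x,y,z$ lie in the ideal $R_2$, shows $S$ is an associative idempotent semigroup of order $13\ge 12$. Any subsemigroup $T$ meets $R_1$ in a subsemigroup of $R_1$ or in $\emptyset$, meets $R_2$ in a subsemigroup of $R_2$ or in $\emptyset$, and conversely any such union is closed; hence the orders of the subsemigroups of $S$ are exactly the nonzero sums $u+v$ with $u\in\{0,1,2,3,4,6,9\}$ and $v\in\{0,1,2,4\}$ (the two sets being the subsemigroup orders of $R_1$ and of $R_2$ with $0$ adjoined). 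Enumerating these sums gives $\{1,2,\dots,11\}\cup\{13\}$, which misses $12$, so $S$ has no subsemigroup of order $12$.

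I expect all the verifications to be routine: the rectangle description of subsemigroups of a rectangular band, the associativity of the two-block semigroup, and the two short finite enumerations. The only genuine idea needed is passing through Lemma~\ref{lem2.8} --- already done --- together with noticing that $n=12$ slips past the rectangular-band argument and must be handled by a semigroup whose set of subsemigroup orders has a gap precisely at $12$; recognizing that $\{0,1,2,3,4,6,9\}+\{0,1,2,4\}$ achieves exactly this is the (mild) crux.
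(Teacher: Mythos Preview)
Your proposal is correct and follows essentially the same approach as the paper: the rectangular band $S_{p,q}$ with $(a_1,b_1)(a_2,b_2)=(a_1,b_2)$ for $n\neq 12$ via Lemma~\ref{lem2.8}, and the ideal extension $S_{3,3}\sqcup S_{2,2}$ for $n=12$. Your characterization of subsemigroups of a rectangular band as combinatorial rectangles and your explicit enumeration of the subsemigroup orders in the $n=12$ case are slightly more detailed than the paper's treatment, but the constructions and the logic are identical.
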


\begin{proof}
Let $S_{p,q} \: (p,q \in \mathbb{N})$ denote the set $\mathbb{Z}_p \times \mathbb{Z}_q$ with a binary operation $(a_1,b_1)\cdot (a_2,b_2)=(a_1,b_2)$, which is clearly an idempotent semigroup. By Lemma \ref{lem2.8}, for every positive integer $n \neq 1,2,4,6,12$, there exist $p,q \in \mathbb{N}$ such that \eqref{eq:211} holds. For any subsemigroup $S$ of $S_{p,q}$, let $A_S$ and $B_S$ be $\left \{ a \in \mathbb{Z}_p \mid \exists b \in \mathbb{Z}_q \:\:(a,b)\in S  \right \}$ and $\left \{ b \in \mathbb{Z}_q \mid \exists a \in \mathbb{Z}_p \:\:(a,b)\in S  \right \}$. If $A_S\neq \mathbb{Z}_p$ or $B_S\neq \mathbb{Z}_q$, then $\left | S \right |\leq max \left \{ (p-1)q,\, p(q-1) \right \}<n$. If $A_S=\mathbb{Z}_p$ and $B_S= \mathbb{Z}_q$, then for every $(a,b) \in S_{p,q}$ there exist $a' \in \mathbb{Z}_p$ and $b' \in \mathbb{Z}_q$ such that $(a,b'),(a',b)\in S$. Thus $(a,b)=(a,b')\cdot (a',b)\in S$, so $\left | S \right |=\left | S_{p,q} \right |=pq>n$. Now the only remaining case is $n=12$. The set $S_{3,3} \sqcup S_{2,2}$ with a binary operation \[x\ast y=\left\{\begin{matrix}
x\cdot y\: \: (x,y \in S_{3,3}\: \:  or\: \:  x,y\in S_{2,2})\\ 
x \:\: (x \in S_{2,2}\: \:  and\: \:  y\in S_{3,3}) \\
y \:\: (x \in S_{3,3}\: \:  and\: \:  y\in S_{2,2})
\end{matrix}\right.\]
is an idempotent semigroup of order 13 which does not have a subsemigroup of order 12. 
\end{proof} 

\vspace{3mm}

\section{Case II : $r>2$}
The idempotency of semigroup $S$ implies $x^r=x$ for every $r>2$ and $x \in S$. From the results of previous section, for every positive integer $r>2$ and $n \neq 1,2,4,6$, there exists a semigroup of order $\geq n$ in which $x^r=x$ for every $x \in S$ and does not have a subsemigroup of order $n$. For every semigroup $S$ in which $x^r=x$, $\forall a \in S$ satisfies $(a^{r-1})^2=a^{r-1}$. Thus there exists a subsemigroup of order 1, namely $\left \{ a^{r-1} \right \}$. 

\vspace{3mm}

\begin{lem} \label{lem3.1}
For every positive integer $r>2$, there exists a semigroup of order $\geq 6$ in which $x^r=x$ and does not have a subsemigroup of order $6$. 
\end{lem}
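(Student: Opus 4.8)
The plan is to exhibit an explicit finite group and argue via Lagrange's theorem. Since $r>2$ we have $r-1\geq 2$, so we may fix a prime $p$ dividing $r-1$. I would then take $G=\mathbb{Z}_p\times\mathbb{Z}_p\times\mathbb{Z}_p$, written additively, which is an abelian group of order $p^3\geq 8\geq 6$, and regard it as a semigroup under its group operation. For every $x\in G$ one has $px=0$, and because $p\mid r-1$ this gives $(r-1)x=0$, equivalently $rx=x$; in multiplicative notation $x^r=x$. Hence $G$ is a semigroup of order $\geq 6$ of the required type.

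The second step is to show that $G$ has no subsemigroup of order $6$. The key elementary fact is that every subsemigroup $T$ of a finite group is automatically a subgroup: for $t\in T$, if $t$ has order $d$ in $G$ then $t^{d}=e\in T$ and $t^{d-1}=t^{-1}\in T$, so $T$ contains the identity and is closed under inverses. Thus any subsemigroup $T$ of $G$ is a subgroup, and by Lagrange's theorem $\left|T\right|$ divides $\left|G\right|=p^3$. Since $6=2\cdot 3$ is not a divisor of any power of a single prime, it follows that no subsemigroup of $G$ has order $6$, which completes the argument.

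There is essentially no hard step here; the points that need a little care are only: (a) that $r-1\geq 2$ genuinely has a prime factor, which is immediate from $r>2$; (b) that $p^3\geq 6$, so the constructed semigroup really does have order $\geq 6$; and (c) the elementary observation that a subsemigroup of a finite group is a subgroup. If one prefers, $\mathbb{Z}_p^{\,k}$ works for any $k$ with $p^k\geq 6$ (so $k=3$ always suffices, and $k=2$ suffices when $p\geq 3$); fixing $k=3$ keeps the construction uniform in $r$. This lemma plays, for general $r>2$, the role that the negation of Theorem~\ref{thm2.6} would play, ruling out $n=6$.
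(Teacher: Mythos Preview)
Your proof is correct and follows essentially the same approach as the paper: both take a prime $p\mid r-1$, use the group $\mathbb{Z}_p^{3}$ as a semigroup with $x^r=x$, and invoke Lagrange's theorem to rule out subsemigroups of order $6$. You are simply more explicit about why a subsemigroup of a finite group is a subgroup, a step the paper leaves implicit.
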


\begin{proof}
Let $q$ be an arbitrary prime factor of $r-1$. Then an abelian group $\mathbb{Z}_q\oplus \mathbb{Z}_q\oplus \mathbb{Z}_q$ is a semigroup of order $q^3>6$ in which $x^r=x$. The order of its subsemigroup divides $q^3$, so it cannot be 6. 
\end{proof}

\vspace{3mm}

\begin{lem} \label{lem3.2}
For every positive integer $r$ in which $r-1$ has a prime factor greater than $2$, there exists a semigroup of order $\geq 4$ in which $x^r=x$ and does not have a subsemigroup of order $2$ or $4$. 
\end{lem}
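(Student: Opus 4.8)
The plan is to take, essentially verbatim, the construction used in Lemma~\ref{lem3.1}: fix a prime factor $q$ of $r-1$ with $q>2$, and consider the elementary abelian group $G=\mathbb{Z}_q\oplus\mathbb{Z}_q$ regarded as a semigroup. Since the exponent of $G$ is $q$ and $q\mid r-1$, every $x\in G$ satisfies $(r-1)x=0$, i.e.\ $rx=x$, which in multiplicative notation reads $x^r=x$; moreover $|G|=q^2\geq 9>4$. Thus $G$ is a semigroup of the required type and of the required size, and it remains only to rule out subsemigroups of order $2$ and of order $4$.

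For that I would first record the standard fact that every nonempty subsemigroup $H$ of a finite group is a subgroup: for $h\in H$ the powers $h,h^2,\dots$ all lie in $H$, and by finiteness $h^m=h^n$ for some $m>n$, whence $h^{m-n}$ is the identity $e$ of $G$ and $e=h^{m-n}\in H$, and $h^{-1}=h^{m-n-1}\in H$; closure then makes $H$ a subgroup. Consequently $|H|$ divides $|G|=q^2$, so $|H|\in\{1,q,q^2\}$. Since $q$ is a prime greater than $2$ we have $q\neq 2$ and $q\neq 4$, while $q^2\neq 2$ and $q^2=4$ would force $q=2$; hence none of $1,q,q^2$ equals $2$ or $4$, and $G$ has no subsemigroup of order $2$ or $4$.

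There is no genuinely hard step here; the construction is the same one already exploited in Lemma~\ref{lem3.1}, and the only points needing care are (a) invoking finiteness to conclude that every subsemigroup of $G$ is a subgroup, and (b) the trivial divisibility observation that a power of an odd prime is never $2$ or $4$. (One could equally well use $\mathbb{Z}_q\oplus\mathbb{Z}_q\oplus\mathbb{Z}_q$ as in Lemma~\ref{lem3.1}, or $\mathbb{Z}_q$ itself when $q\geq 5$ together with $\mathbb{Z}_q\oplus\mathbb{Z}_q$ only in the remaining case $q=3$; the uniform choice $\mathbb{Z}_q\oplus\mathbb{Z}_q$ is the most economical, since $q^2\geq 9>4$ already.)
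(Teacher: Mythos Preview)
Your proposal is correct and follows essentially the same approach as the paper: take a prime $q>2$ dividing $r-1$, use $G=\mathbb{Z}_q\oplus\mathbb{Z}_q$, and observe that subsemigroups of $G$ are subgroups whose orders divide $q^2$. The paper's proof is terser (it simply asserts that subsemigroup orders divide $q^2$), while you spell out why subsemigroups of finite groups are subgroups; otherwise the arguments are identical.
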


\begin{proof}
Let $q$ be a prime factor of $r-1$ greater than 2. Then an abelian group $\mathbb{Z}_q\oplus \mathbb{Z}_q$ is a semigroup of order $q^2>4$ in which $x^r=x$. The order of its subsemigroup divides $q^2$, so it cannot be 2 or 4. 
\end{proof}

\vspace{3mm}

\begin{thm} \label{thm3.3}
For a positive integer of the form $r=2^m+1 \:\:(m \in \mathbb{N})$, every semigroup $S$ of order $\geq2$ in which $x^r=x$ has a subsemigroup of order $2$. 
\end{thm}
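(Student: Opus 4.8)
The plan is to dichotomize on whether $S$ is a band (i.e.\ whether every element is idempotent).

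First I would record the structure of the cyclic subsemigroups of $S$. Fix $b \in S$ and consider $\langle b \rangle = \{b, b^2, b^3, \dots\}$. Multiplying $b^r = b$ on the right by $b^{k-1}$ gives $b^{k+(r-1)} = b^k$ for every $k \ge 1$, so the sequence $b, b^2, b^3, \dots$ is purely periodic with minimal period $d$, and $d$ divides $r - 1 = 2^m$. Writing $\langle b \rangle = \{b, b^2, \dots, b^d\}$, the element $e_b := b^{d}$ is a two-sided identity of $\langle b \rangle$ and each $b^j$ has inverse $b^{d-j}$; hence $\langle b \rangle$ is a cyclic group whose order $d$ is a power of $2$, and $d = 1$ holds precisely when $b$ is idempotent.

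Next I would handle the two cases. If every element of $S$ is idempotent, then $S$ is an idempotent semigroup of order $\ge 2$, and Theorem~\ref{thm2.1} already supplies a subsemigroup of order $2$. Otherwise there is some $b \in S$ with $b^2 \ne b$; by the previous paragraph $\langle b \rangle$ is a cyclic group of order $d = 2^k$ with $k \ge 1$, so $b^{d/2}$ has order $2$ and $\{e_b,\, b^{d/2}\}$ is a subsemigroup of $S$ of order $2$. This exhausts all cases.

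I do not expect a genuine obstacle here: the whole argument rests on the elementary observation that in a finite semigroup satisfying $x^r = x$ every monogenic subsemigroup is a group whose order divides $r - 1$, combined with Theorem~\ref{thm2.1}. The hypothesis $r = 2^m + 1$ is used exactly at the point where one needs every divisor $> 1$ of $r - 1$ to be even, so that such a nontrivial cyclic group contains an element of order $2$; for contrast, if $r - 1$ had an odd prime divisor $q$, then $\mathbb{Z}_q$ would be a semigroup satisfying $x^r = x$, of order $q \ge 3$, with no subsemigroup of order $2$, so the restriction on $r$ is necessary.
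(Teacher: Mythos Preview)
Your proof is correct and follows essentially the same route as the paper: split into the idempotent case (handled by Theorem~\ref{thm2.1}) and the non-idempotent case, where one analyzes the monogenic subsemigroup generated by a non-idempotent element, shows its order divides $2^m$, and extracts the order-$2$ subgroup. Your exhibited pair $\{e_b,\,b^{d/2}\}$ is exactly the paper's $\{a^{2^{m'}},\,a^{2^{m'-1}}\}$ in different notation; your framing in terms of the cyclic group structure of $\langle b\rangle$ is marginally cleaner, but the mathematical content is identical.
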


\begin{proof}
If $S$ is idempotent, then by Theorem \ref{thm2.1}, $S$ has a subsemigroup of order 2. Suppose that there exists $a \in S$ such that $a^2 \neq a$. Let $k$ be the smallest positive integer $t>2$ such that $a^t=a$. Then there uniquely exist nonnegative integers $q$ and $r \leq k-2$ such that $2^m=(k-1)q+r$. If $r>0$, $a=a^{2^m+1}=a^{(k-1)q+r+1}=a^{(k-1)(q-1)+r+1}=\cdots=a^{r+1}$ and $1<r+1\leq k-1$, a contradiction. Thus $r=0$ and $k=2^{m'}+1$ for some positive integer $m'$. Now it is easy to see that $\left \{ a^{2^{m'-1}},a^{2^{m'}} \right \}$ is a subsemigroup of $S$ of order 2.
\end{proof}

\vspace{3mm}

\begin{thm} \label{thm3.4}
For a positive integer of the form $r=2^m+1 \:\:(m \in \mathbb{N})$, every semigroup $S$ of order $\geq4$ in which $x^r=x$ has a subsemigroup of order $4$. 
\end{thm}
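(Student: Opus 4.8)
\medskip
\noindent\textbf{Proof proposal.} The plan is to dichotomize on whether $S$ is idempotent, and, in the non-idempotent case, to reuse the analysis of cyclic subsemigroups from the proof of Theorem~\ref{thm3.3}. If $x^2=x$ for every $x\in S$, then $S$ is an idempotent semigroup of order $\ge 4$, and Theorem~\ref{thm2.3} directly yields a subsemigroup of order $4$. So we may assume there is $a\in S$ with $a^2\neq a$.

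By the argument in the proof of Theorem~\ref{thm3.3}, the smallest $t>2$ with $a^t=a$ has the form $2^{m'}+1$, so $\langle a\rangle=\{a,a^2,\dots,a^{2^{m'}}\}$ is a cyclic group of order $2^{m'}$ with $m'\ge 1$. If $m'\ge 2$, then this group has order $\ge 4$ and contains a subgroup of order $4$, which is a subsemigroup of $S$ of order $4$; so we may assume that every non-idempotent element of $S$ generates a cyclic subsemigroup of order exactly $2$. Together with the idempotent elements, this forces $x^3=x$ for every $x\in S$ (trivial for idempotents, and for a non-idempotent $x$ it is exactly the statement that $\langle x\rangle$ has order $2$). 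Consequently every maximal subgroup of $S$ has exponent dividing $2$, hence is an elementary abelian $2$-group; if any maximal subgroup has order $\ge 4$ it contains a copy of $\mathbb{Z}_2\times\mathbb{Z}_2$, again a subsemigroup of order $4$.

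The remaining --- and principal --- case is thus: $S$ is not idempotent, $|S|\ge 4$, $x^3=x$ for every $x\in S$, and every maximal subgroup of $S$ has order $\le 2$. Here I would likely argue by induction on $|S|$, reducing to the situation in which every proper subsemigroup has order $\le 3$ (otherwise a proper subsemigroup of order $\ge 4$ finishes the proof by induction, since it again satisfies $x^3=x$). Fix a non-idempotent $a$ and set $e=a^2$, so that $\{e,a\}$ is a subsemigroup isomorphic to $\mathbb{Z}_2$, and pick a third element $b\in S\setminus\{e,a\}$; since no maximal subgroup has order $>2$, $b^2$ is an idempotent distinct from $e$, so $S$ has at least two distinct idempotents. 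I would then try to extract a subsemigroup of order $4$ from the finitely many products that $a$ and $b$ generate --- among $b^2$, $ab$, $ba$, $aba$, $bab$, $eb$, $be$, $abab$, and so on --- using that for any $x,y$ the element $(xy)^2$ is idempotent (because $(xy)^3=xy$), which keeps the relevant generated sets small, and invoking Theorem~\ref{thm2.3} the instant one of these sets turns out to be an idempotent subsemigroup of order $\ge 4$.

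The hard part is precisely this last case. I expect it to require a case analysis on the multiplication table of a small generated subsemigroup such as $\langle a,b\rangle$, branching on the values of $ab$, $ba$, $aba$, $bab$ and on whether $b$ is idempotent, entirely parallel to Lemmas~\ref{lem2.2}--\ref{lem2.5a} and the proof of Theorem~\ref{thm2.3}, but carried out with the weaker identity $x^3=x$ in place of $x^2=x$ --- so that, unlike in a band, products of idempotents need not be idempotent and one must track squares as well. Ruling out every configuration in which such an $S$ could fail to contain a subsemigroup of order $4$ is where the real work lies.

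\medskip
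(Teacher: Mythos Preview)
Your initial reductions are correct and match the paper's: dichotomize on idempotency, invoke Theorem~\ref{thm2.3} in the idempotent case, and in the non-idempotent case use the cyclic analysis from Theorem~\ref{thm3.3} to reduce to $x^3=x$ for all $x\in S$ with some $a$ satisfying $a^2\neq a$. Your maximal-subgroup observation (that any maximal subgroup of order $\ge 4$ would contain $\mathbb{Z}_2\times\mathbb{Z}_2$) is a clean structural remark the paper does not make, and your deduction that $b^2\neq e$ for $b\notin\{e,a\}$ is correct, since $b^2=e$ forces $eb=be=b$ and hence $b\in H_e$, making $|H_e|\ge 3$.

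The genuine gap is that you stop exactly where the work begins. Your plan to ``extract a subsemigroup of order $4$ from the finitely many products that $a$ and $b$ generate'' is too vague to succeed without the specific structure the paper exploits. The paper's proof proceeds in two non-obvious stages after the $x^3=x$ reduction. First it eliminates the possibility that $|\langle a,x\rangle|=3$ for every $x\notin\langle a\rangle$, by a case split on whether $b^2,c^2\in\{a^2\}$ or $b^2=b$, $c^2=c$ for two further elements $b,c$; each branch produces a concrete $4$-element subsemigroup. Second, having obtained $u$ with $|\langle a,u\rangle|\ge 4$, it studies the auxiliary elements $y=au$, $z=a^2u$, $v=uau$, $w=aua$ and derives the key dichotomies $auau=au$ or $a^2u=ua^2$, and $a^2ua^2u=a^2u$ or $a^2u=ua^2$. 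The two resulting cases (i) $a^2u=ua^2$ and (ii) $auau=au$ together with $a^2ua^2u=a^2u$ are then dispatched via explicit multiplication tables (Tables~\ref{tab:15}--\ref{tab:17}) for $\langle a,v\rangle$, $\langle a,u\rangle$ (commuting case), and $\langle a,w\rangle$, each time exhibiting a chain of subsemigroups that forces $|\langle a,u\rangle|\le 3$, a contradiction. None of these moves --- the choice of $v=uau$ and $w=aua$, the derivation of the dichotomies, or the table computations --- is anticipated by your sketch, and this is the substance of the theorem.
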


\begin{proof}
Suppose that $S$ does not have a subsemigroup of order 4. By Theorem \ref{thm2.3}, $S$ is not an idempotent semigroup. Suppose that there exists $a \in S$ such that $a^3 \neq a$. Let $k$ be the smallest positive integer $t>3$ such that $a^t=a$. From the proof of Theorem \ref{thm3.3}, $k=2^{m'}+1$ for some positive integer $m'\geq 2$. It is easy to see that $\left \{ a^{2^{m'-2}},a^{2^{m'-1}},a^{3\cdot 2^{m'-2}},a^{2^{m'}} \right \}$ is a subsemigroup of $S$ of order 4, a contradiction. Thus every element $x$ of $S$ satisfies $x^3=x$ and there exists $a \in S$ such that $a^2 \neq a$. \\
Suppose that for every $x \in S-\left \langle a \right \rangle$, $\left | \left \langle a,x \right \rangle \right |=3$. Then $x \in S-\left \langle a \right \rangle$ implies $x^2 \in \left \{ a,a^2,x \right \}$ and $(x^2)^2=x^2$, so $x^2=a^2$ or $x^2=x$. Let $b$ and $c$ be two different elements of $S-\left \langle a \right \rangle$. We have the following 3 cases. \\
\\
(i) $b^2=a^2$, $c^2=a^2$ : $(bc)(cb)=bb^2b=b^2=a^2$, so $\left \{ a,a^2,bc,cb \right \}$ is a subsemigroup of $S$ and $\left | \left \{ a,a^2,bc,cb \right \} \right |\leq 3$. If $bc \in \left \{ a,a^2 \right \}$, then $b=b^3=bc^2 \in \left \{ ac,a^2c \right \}$, a contradiction. Thus $bc \notin \left \{ a,a^2 \right \}$ and for the same reason, $cb \notin \left \{ a,a^2 \right \}$. This implies $bc=cb$, so $bcb=cb^2=c^3=c$ and $cbc=b$. Thus $\left \langle a,b,c \right \rangle=\left \{ a,a^2,b,c,bc \right \}$. For $x \in \left \{ b,c,bc \right \}$, $x^2=a^2$ so $a^2x=xa^2=x^3=x$. Thus $\left \{ a^2,b,c,bc \right \}$ is a subsemigroup of $\left \langle a,b,c \right \rangle$, a contradiction. \\
(ii) $c^2=c$ : Let $M_1$ be $\left \{ a,a^2,c,cbc \right \}$. Then $bcbc \in \left \{ a^2,bc \right \}$ and $cbcbc \in \left \{ ca^2,cbc \right \} \subset M_1$. This implies $M_1$ and $M_1 \cup \left \{ bc \right \}$ are subsemigroups of $\left \langle a,b,c \right \rangle$. $\left | M_1 \right | \neq 4$ implies $\left | M_1 \right | \leq 3$, and $\left | M_1 \cup \left \{ bc \right \}\right | \neq 4$ implies $\left | M_1 \cup \left \{ bc \right \} \right | \leq 3$. Thus $bc \in \left \{ a,a^2,c \right \}$ and for the same reason, $cb \in \left \{ a,a^2,c \right \}$. Then $\left | \left \langle a,b,c \right \rangle \right |=\left | \left \{ a,a^2,b,c \right \} \right |=4$, a contradiction. \\
(iii) $b^2=b$ : symmetric to (ii). \\
\\
Thus there exists $u \in S-\left \langle a \right \rangle$ such that $\left | \left \langle a,u \right \rangle \right | \geq4$. Let $x$, $y$ and $z$ be $a^2$, $au$ and $a^2u$. Then $\left \langle x,y \right \rangle=\left \{ x,y,y^2,yx,y^2x \right \}$ and $\left \langle y,yx \right \rangle=\left \{ y,y^2,yx,y^2x \right \}$. $\left | \left \langle y,yx \right \rangle \right | \neq 4$ implies $\left | \left \langle y,yx \right \rangle \right | \leq3$, and $\left | \left \langle x,y \right \rangle \right | \neq 4$ implies $\left | \left \langle x,y \right \rangle \right | \leq3$. Each of $au=a^2$, $aua^2=a^2$ and $auau=aua^2$ implies $auau=au$, and $auau=a^2$ implies $aua^2=au$, so $auau=au$ or $aua^2=au$. For the same reason, $uaua=ua$ or $a^2ua=ua$. $auau=au$ implies $uaua=u(auau)a=ua$, and $uaua=ua$ implies $auau=a(uaua)u=au$, so $auau=au$ is equivalent to $uaua=ua$. If $auau \neq au$, then $aua^2=au$ and $a^2ua=ua$, so $a^2u=a^2ua^2=ua^2$. Thus $auau=au$ or $a^2u=ua^2$. If we replace $y$ with $z$ and repeat the same argument, then we get $a^2ua^2u=a^2u$ or $a^2u=ua^2$. Now we have the following 2 cases.\\
\begin{table}
      \small
      \centering
      \setlength\tabcolsep{2.5pt}
\begin{tabular}{c"c|c|c|c|c|c|c|c|c|c|c|c|c|c|c}
    $*$ & $a$ & $a^2$ & $v$ & $av$ & $va$ & $ava$ & $v^2$ & $av^2$ & $v^2a$ & $av^2a$\\\thickhline
    $a$ & $a^2$ & $a$ & $av$ & $a^2v$ & $ava$ & $va$ & $av^2$ & $v^2$ & $av^2a$ & $v^2a$\\\hline
    $a^2$ & $a$ & $a^2$ & $v$ & $av$ & $va$ & $ava$ & $v^2$ & $av^2$ & $v^2a$ & $av^2a$\\\hline
    $v$ & $va$ & $v$ & $v^2$ & $v$ & $v^2a$ & $va$ & $v$ & $v^2$ & $va$ & $v^2a$\\\hline
    $av$ & $ava$ & $av$ & $av^2$ & $av$ & $av^2a$ & $ava$ & $av$ & $av^2$ & $ava$ & $av^2a$\\\hline
    $va$ & $v$ & $va$ & $v$ & $v^2$ & $va$ & $v^2a$ & $v^2$ & $v$ & $v^2a$ & $va$\\\hline
    $ava$ & $av$ & $ava$ & $av$ & $av^2$ & $ava$ & $av^2a$ & $av^2$ & $av$ & $av^2a$ & $ava$\\\hline
    $v^2$ & $v^2a$ & $v^2$ & $v$ & $v^2$ & $va$ & $v^2a$ & $v^2$ & $v$ & $v^2a$ & $va$\\\hline
    $av^2$ & $av^2a$ & $av^2$ & $av$ & $av^2$ & $ava$ & $av^2a$ & $av^2$ & $av$ & $av^2a$ & $ava$\\\hline
    $v^2a$ & $v^2$ & $v^2a$ & $v^2$ & $v$ & $v^2a$ & $va$ & $v$ & $v^2$ & $va$ & $v^2a$\\\hline
    $av^2a$ & $av^2$ & $av^2a$ & $av^2$ & $av$ & $av^2a$ & $ava$ & $av$ & $av^2$ & $ava$ & $av^2a$
\end{tabular}
    \caption{$\left \langle a,v \right \rangle=\left \{ a,a^2,v,av,va,ava,v^2,av^2,v^2a,av^2a \right \}$ ($v=uau$)}
    \label{tab:15} 
\end{table} 
\begin{table}
    \begin{minipage}{.5\linewidth}
      \small
      \centering
      \setlength\tabcolsep{2.5pt}
\begin{tabular}{c"c|c|c|c|c|c|c|c}
    $*$ & $a$ & $a^2$ & $u$ & $u^2$ & $au$ & $a^2u$ & $au^2$ & $a^2u^2$ \\\thickhline
    $a$ & $a^2$ & $a$ & $au$ & $au^2$ & $a^2u$ & $au$ & $a^2u^2$ & $au^2$\\\hline
    $a^2$ & $a$ & $a^2$ & $a^2u$ & $a^2u^2$ & $au$ & $a^2u$ & $au^2$ & $a^2u^2$\\\hline
    $u$ & $au$ & $a^2u$ & $u^2$ & $u$ & $au^2$ & $a^2u^2$ & $au$ & $a^2u$\\\hline
    $u^2$ & $au^2$ & $a^2u^2$ & $u$ & $u^2$ & $au$ & $a^2u$ & $au^2$ & $a^2u^2$\\\hline
    $au$ & $a^2u$ & $au$ & $au^2$ & $au$ & $a^2u^2$ & $au^2$ & $a^2u$ & $au$\\\hline
    $a^2u$ & $au$ & $a^2u$ & $a^2u^2$ & $a^2u$ & $au^2$ & $a^2u^2$ & $au$ & $a^2u$\\\hline
    $au^2$ & $a^2u^2$ & $au^2$ & $au$ & $au^2$ & $a^2u$ & $au$ & $a^2u^2$ & $au^2$\\\hline
    $a^2u^2$ & $au^2$ & $a^2u^2$ & $a^2u$ & $a^2u^2$ & $au$ & $a^2u$ & $au^2$ & $a^2u^2$
\end{tabular}
      \caption{\small{$\left \langle a,u \right \rangle=\left \{ a,a^2,u,u^2,au,a^2u,au^2,a^2u^2 \right \}$}}
      \label{tab:16} 
    \end{minipage}%
    \begin{minipage}{.5\linewidth}
      \small 
      \centering
      \setlength\tabcolsep{2.5pt}
\begin{tabular}{c"c|c|c|c|c|c}
    $*$ & $a$ & $a^2$ & $w$ & $aw$ & $wa$ & $awa$ \\\thickhline
    $a$ & $a^2$ & $a$ & $aw$ & $w$ & $awa$ & $wa$ \\\hline
    $a^2$ & $a$ & $a^2$ & $w$ & $aw$ & $wa$ & $awa$ \\\hline
    $w$ & $wa$ & $w$ & $w$ & $w$ & $wa$ & $wa$ \\\hline
    $aw$ & $awa$ & $aw$ & $aw$ & $aw$ & $awa$ & $awa$ \\\hline
    $wa$ & $w$ & $wa$ & $w$ & $w$ & $wa$ & $wa$ \\\hline
    $awa$ & $aw$ & $awa$ & $aw$ & $aw$ & $awa$ & $awa$
\end{tabular}
    \caption{$\left \langle a,w \right \rangle$ ($w=aua$)}
    \label{tab:17} 
    \end{minipage}
\end{table} \\
(i) $a^2u=ua^2$ : Let $v$ be $uau$. Then $vav=uauauau=uau=v$, $a^2v=(a^2u)au=ua^2au=uau=v$ and $va^2=v$, so $\left \langle a,v \right \rangle=\left \{ a,a^2,v,av,va,ava,v^2,av^2,v^2a,av^2a \right \}$. Let $M_2$ be $\left \{ av,ava,av^2,av^2a \right \}$. Then from the table \ref{tab:15}, $M_2$, $M_2 \cup \left \{ a^2 \right \}$ and $M_2 \cup \left \{ a,a^2 \right \}$ are subsemigroups of $\left \langle a,v \right \rangle$. Thus $\left | \left \{ a,a^2,av,ava \right \} \right |\leq 3$. Suppose that $\left | \left \langle a,v \right \rangle \right |\geq 4$. If $av \in \left \{ a,a^2 \right \}$ or $ava \in \left \{ a,a^2 \right \}$, then $a(av)=v$ and $a(ava)a=v$ imply $v \in \left \{ a,a^2 \right \}$, a contradiction. Thus $av=ava$ and for the same reason, $ava=va$. Now $av=ava=va$ implies $\left \langle a,v \right \rangle=\left \{ a,a^2,v,av,v^2 \right \}$ and from the table \ref{tab:15}, $\left \{ a^2,v,av,v^2 \right \}$ is a subsemigroup of $\left \langle a,v \right \rangle$, a contradiction. Thus $\left | \left \langle a,v \right \rangle \right |\leq 3$, and this implies $\left | \left \{ a,a^2,v,av \right \} \right |\leq 3$. \\
If $a=av$ or $a^2=v$, then $a^2=(av)^2=av=a$, a contradiction. $a=v$ implies $a^2=av$. Thus $a^2=av$ or $v=av$. $a^2=av$ implies $a=aa^2=a(auau)=(ua^2)au=uau$. $v=av$ implies $aua=(auau)aua=(uau)aua=ua$ and for the same reason, $aua=au$. Thus $au=ua$ or $uau=a$. \\
If $au=ua$, then $\left \langle a,u \right \rangle=\left \{ a,a^2,u,u^2,au,a^2u,au^2,a^2u^2 \right \}$. Let $M_3$ be $\left \{ au,a^2u,au^2,a^2u^2 \right \}$. From the table \ref{tab:16}, it is easy to see that each of $M_3$, $M_3\cup \left \{ u^2 \right \}$, $M_3\cup \left \{ u,u^2 \right \}$ and $M_3\cup \left \{ a^2,u,u^2 \right \}$ are subsemigroups of $\left \langle a,u \right \rangle$. Thus $\left | \left \langle a,u \right \rangle \right |\leq 3$, a contradiction. \\
Now suppose that $uau=a$. Then $ya=au(uau)=(au^2)au=((uau)u^2)au=(uau)au=ay$ and $y^2=a(uau)=a^2$, so $\left \langle a,y \right \rangle=\left \{ a,a^2,y,ay \right \}$ and $\left | \left \langle a,y \right \rangle \right |\leq 3$. If $y \in \left \{ a,a^2 \right \}$ or $ay \in \left \{ a,a^2 \right \}$, then $\left \langle a,u \right \rangle=\left \{ a,a^2,u,u^2 \right \}$ and $\left | \left \langle a,u \right \rangle \right |\leq 3$, a contradiction. Thus $a^2u=au$ and for the same reason, $ua^2=ua$. This implies $au=ua$, which already proved to be impossible. \\
\\
(ii) $auau=au$ and $a^2ua^2u=a^2u$ : Let $w$ be $aua$. Then $w^2=a(a^2ua^2u)a=a(a^2u)a=w$, $waw=(auau)a=w$ and $a^2w=wa^2=w$ imply $\left \langle a,w \right \rangle=\left \{ a,a^2,w,aw,wa,awa \right \}$. Let $M_4$ be $\left \{ w,aw,wa,awa \right \}$. From the table \ref{tab:17}, it is easy to see that each of $M_4$, $M_4\cup \left \{ a^2 \right \}$ and $M_4\cup \left \{ a,a^2 \right \}$ are subsemigroups of $\left \langle a,w \right \rangle$. Thus $\left | \left \langle a,w \right \rangle \right |\leq 3$. If $aua \in \left \{ a,a^2 \right \}$ or $a^2ua \in \left \{ a,a^2 \right \}$, then $(aua)u=au$ and $a(aua)au=a^2u$ imply $a^2u=au$. If $a^2ua=aua$, then $a^2u=a(auau)=auau=au$. Thus $a^2u=au$ and for the same reason, $ua^2=ua$. \\ 
Now $\left \langle a,y \right \rangle=\left \{ a,a^2,y,ya \right \}$ implies $\left | \left \langle a,y \right \rangle \right |\leq 3$. $y^2=y$ and $(ay)^2=ay$ imply $y \notin \left \{ a,a^2 \right \}$, and $a(ya)=ya$ implies $ya \notin \left \{ a,a^2 \right \}$. Thus $ya=y$ and for the same reason, $aua=ua$ ($auau=au$ is equivalent to $uaua=ua$, and $a^2ua^2u=a^2u$ is equivalent to $ua^2ua^2=ua^2$). Thus $\left \langle a,u \right \rangle=\left \{ a,a^2,u,u^2,au \right \}$ and $\left \langle a^2,u \right \rangle=\left \{ a^2,u,u^2,au \right \}$, so $\left | \left \langle a,u \right \rangle \right |\leq 3$, a contradiction. 
\end{proof}

\vspace{3mm}

Thus we determine the complete answer to the Question. Table \ref{tab:18} shows the answer to the Question for each positive integer $r \geq 2$. 

\vspace{3mm}

\begin{table}[h]
\centering
\begin{tabular}{|c|c|}\hline
$r$ & $n$ \\\hline
2 & 1,2,4,6 \\\hline
$2^m+1$ ($m>0$) & 1,2,4 \\\hline
otherwise & 1 \\\hline
\end{tabular}
\caption{Results}
\label{tab:18} 
\end{table}

\vspace{3mm}

\section{References}

\begin{enumerate}

\item{Green, J.A., Rees, D.: On semigroups in which $x^r=x$. Proc. Camb. Phil. Soc. 48, 35-40 (1952)} \label{ref:1} 

\end{enumerate} 

\vspace{3mm}

Department of Mathematical Sciences, Seoul National University, Seoul 151-747, Korea 

e-mail: moleculesum@snu.ac.kr

\end{document}